\numberwithin{equation}{section}
\newtheorem{theorem}{Theorem}[section]
\newtheorem{lemma}[theorem]{Lemma}
\newtheorem{proposition}[theorem]{Proposition}
 \newtheorem{corollary}[theorem]{Corollary}
      \newtheorem{conjecture}{Conjecture}
      \theoremstyle{definition}
     \theoremstyle{remark}
     \newtheorem{remark}[theorem]{Remark}
\newcommand{\Sym}{\mathop{\mathrm{Sym}}}
\newcommand{\Alt}{\mathop{\mathrm{Alt}}}
\newcommand{\Syl}{\mathop{\mathrm{Syl}}}
\newcommand{\Sylp}{\mathop{\mathrm{Syl}_p}}
\newcommand{\GL}{\mathop{\mathrm{GL}}}
\newcommand{\Core}{\mathop{\mathrm{Core}}}
\newcommand{\CoreG}{\mathop{\mathrm{Core}_G}}
\newcommand{\DS}{\mathcal{DS}}
\newcommand\blfootnote[1]{%
  \begingroup
  \renewcommand\thefootnote{}\footnote{#1}%
  \addtocounter{footnote}{-1}%
  \endgroup
}
 \definecolor{mycolor}{rgb}{0.55,0.0,0.16}
  \definecolor{myred}{rgb}{0.75,0.0,0.16} 
  \definecolor{mygreen}{rgb}{0.0,0.4,0.16} 
  \definecolor{myviolet}{rgb}{1,0,1} 
   \definecolor{mypink}{rgb}{0.67,0,0.47}
	\newcommand{\MYhref}[3][black]{\href{#2}{\color{#1}{#3}}}% 
\subjclass[2020]{Primary: 20D20, 20B30, 20Fxx}  
\keywords{Sylow subgroups, nilpotent subgroups, symmetric group, synchronization} 
\author[Francesca Lisi]{Francesca Lisi}
\address{\parbox{\linewidth}{Francesca Lisi}}
\email{francesca.lisi@postecert.it} 
\author[Luca Sabatini]{Luca Sabatini}
\address{\parbox{\linewidth}{Luca Sabatini, Mathematics Institute, Zeeman Building, University of Warwick\\
Coventry CV4\,7AL, United Kingdom \vspace{0.1cm}}}
\email{luca.sabatini@warwick.ac.uk, sabatini.math@gmail.com} 
 \title[Sylow subgroups for distinct primes]{Sylow subgroups for distinct primes\\and intersection of nilpotent subgroups} 
\begin{document}

\maketitle 

%\begin{center} {\small \itshape To the memory of Marty Isaacs\\and the grace of his mathematics.} \end{center}
\blfootnote{This paper is to the memory of Marty Isaacs and the grace of his mathematics.}

\begin{abstract} 
Let $G$ be a finite group and let $(P_i)_{i=1}^n$ be Sylow subgroups for distinct primes $p_1,\ldots,p_n$.
We conjecture that there exists $x \in G$ such that $P_i \cap P_i^x$ is inclusion-minimal
in $\{ P_i \cap P_i^g : g \in G\}$ for all $i$.
As a first step in this direction,
we show that a finite group cannot be covered by (proper) Sylow normalizers for distinct primes.
Then we settle the conjecture in two opposite situations:
symmetric and alternating groups of large degree and metanilpotent groups of odd order.
Applications concerning the intersections of nilpotent subgroups are discussed.
\end{abstract}

% \tableofcontents

\vspace{0.5cm}
\section{Introduction} \label{sec1}

In this paper we study the action of a finite group on the totality of its Sylow subgroups by conjugation.
This will provide a unified perspective on several results in the literature:
in particular, it relates theorems on the intersection of Sylow subgroups
\cite{Bro63,DGG+25,Ebe25,Ito58,Man75,MZ96,Rob84}
to theorems on the intersection of nilpotent subgroups
\cite{Bia75,Her71,Kur13,Pas66,Zen94,Zen96}.

Let $G$ be a finite group and let $\Syl_p(G)$ be the set of Sylow $p$-subgroups.
Let $\rho(G)$ be the set of primes $p$ for which $G$ has a non-normal Sylow $p$-subgroup and let $r=|\rho(G)|$.
Then $G$ acts on the Cartesian product
$$ \DS(G) \> = \> \bigtimes_{p \in \rho(G)} \Sylp(G) $$
by conjugation: If $P_i \in \Syl_{p_i}(G)$ for some $p_i \in \rho(G)$, and $g \in G$,
then
$$ (P_1,\ldots,P_r)^g \> = \> (P_1^g,\ldots,P_r^g) . $$

The kernel of this action is the intersection of all Sylow normalizers.
By an old theorem of Baer \cite{Bae53} this is the hypercenter
(i.e. the final term in the upper central series)
and the corresponding quotient is the permutation group on $\DS(G)$ that we are interested in.
If $r=1$, then Sylow's theorem implies that
the action of $G$ on $\DS(G)$ is transitive.
In general this is false, as for example it can happen that $N_G(P_1) \subseteq N_G(P_2)$, so there is no $g \in G$ such that $(P_1,P_2)^g = (P_1,\overline{P_2})$ if $P_2 \neq \overline{P_2}$.
The main purpose of the paper is to show that
it is still possible to move the elements of $\DS(G)$ significantly under the action of $G$
and to highlight some consequences of this fact.

We propose the following conjecture:

\begin{conjecture}[Main] \label{conj:A}
Let $G$ be a finite group and let $(P_i)_{i=1}^n$ be Sylow subgroups for distinct primes $p_1,\ldots,p_n$.
Then there exists $x \in G$ such that $P_i \cap P_i^x$ is minimal with respect to inclusion in $\{ P_i \cap P_i^g : g \in G\}$,
for all $i=1,\ldots,n$.
\end{conjecture}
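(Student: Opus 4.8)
The plan is to recast the problem as a covering question about double cosets, attempt a counting bound in general, and then handle the two settled regimes by hand. A harmless first reduction: if some $P_i$ is normal in $G$ then $P_i\cap P_i^{\,g}=P_i$ for every $g$, so every $x$ is good for that index; hence we may discard these and assume each $p_i\in\rho(G)$. For a fixed prime $p_i$ write $N_i=N_G(P_i)$ and study the map $g\mapsto P_i\cap P_i^{\,g}$. The key structural observation I would record is that for $n_1,n_2\in N_i$ one has $P_i\cap P_i^{\,n_1 g n_2}=(P_i\cap P_i^{\,g})^{n_2}$, because $P_i^{n_1}=P_i$ and $P_i^{n_2}=P_i$ give $P_i\cap P_i^{\,g n_2}=P_i^{n_2}\cap P_i^{\,g n_2}=(P_i\cap P_i^{\,g})^{n_2}$. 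Thus $N_i$ permutes the poset $\{P_i\cap P_i^{\,g}:g\in G\}$ by inclusion-preserving maps, so inclusion-minimality is constant on each $(N_i,N_i)$-double coset, and the \emph{good set} $G_i:=\{g:P_i\cap P_i^{\,g}\text{ is minimal}\}$ is a nonempty union of double cosets. The conjecture becomes the assertion $\bigcap_{i=1}^n G_i\neq\emptyset$.

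In this language the first attempt is a counting argument: if the \emph{bad} sets $B_i:=G\setminus G_i$ satisfy $\sum_{i=1}^n|B_i|<|G|$, then $\bigl|\bigcup_i B_i\bigr|<|G|$ and a common good element exists. Since $p_i\in\rho(G)$ the element $P_i$ is not minimal, so $N_i\subseteq B_i$; the result that $G$ is not covered by its Sylow normalizers for distinct primes is precisely the statement that $\bigcup_{i}\bigcup_{g\in G}N_i^{\,g}\neq G$, and serves as the prototype for the estimate I would want on the larger sets $B_i$. I would try to show each $B_i$ lies in a controlled union of $N_i$-cosets by bounding how far minimality can fail; I expect this to go through only under extra hypotheses, since $B_i$ can occupy a large fraction of $G$.

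For the two settled regimes I would argue concretely. In $\Sym(n)$ and $\Alt(n)$ with $n$ large I would exhibit a single permutation $x$, prescribed by its cycle type, so that for every relevant prime $p_i$ the supports of the base blocks of $P_i$ and of $P_i^{\,x}$ overlap as little as the $p_i$-part of $n!$ allows, and then verify prime by prime that $P_i\cap P_i^{\,x}$ reaches the bottom of the poset; the large degree is exactly what gives room to satisfy all primes with one $x$. In the metanilpotent odd-order case I would use solvability and the Fitting structure: via coprime action and conjugacy of Hall subgroups and complements, identify the minimal value of $P_i\cap P_i^{\,g}$ with an intrinsic subgroup of $p_i$-core type, and then build $x$ from a single element whose image in $G/F(G)$ moves all the relevant Sylow subgroups simultaneously.

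The main obstacle, and the reason this remains a conjecture, is the gap between \emph{minimal} and \emph{minimum}: the poset $\{P_i\cap P_i^{\,g}\}$ may have several incomparable minimal elements, so $G_i$ carries no subgroup or coset skeleton on which to anchor a uniform density bound, and the good double cosets for the distinct normalizers $N_1,\ldots,N_n$ must all be made to meet at once. The non-covering theorem dispatches the $r=1$-flavoured shadow of this difficulty—getting a single $x$ outside every normalizer—but $x\notin N_i$ only forces $P_i\cap P_i^{\,x}\subsetneq P_i$, not minimality, so it does not by itself yield $\bigcap_i G_i\neq\emptyset$.
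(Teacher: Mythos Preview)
The statement is labelled a \emph{conjecture} in the paper and is not proved there in full generality; only the special cases of large symmetric and alternating groups (Theorem~\ref{thSym}) and metanilpotent groups of odd order (Theorem~\ref{thMetaNilp}) are established. Your proposal correctly recognises this and is really a research outline rather than a proof; the closing paragraph, where you explain why the non-covering theorem does not suffice, is accurate.

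Your double-coset framing of the good sets $G_i$ is correct and matches the paper's $\Gamma_G(P)$. One slip: Theorem~\ref{thUnion} asserts only that $\bigcup_i N_G(P_i)\neq G$ for a \emph{fixed} tuple $(P_1,\ldots,P_n)$, not that $\bigcup_i\bigcup_{g\in G}N_G(P_i)^g\neq G$; the latter is strictly stronger and is not what the paper proves (nor is it obviously true).

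On the two settled regimes your sketches diverge from the paper's actual arguments. For $\Sym(n)$ and $\Alt(n)$ you propose constructing an explicit $x$ by cycle type; the paper instead runs the probabilistic union bound you wrote down for the general case, establishing in Proposition~\ref{propSym} that $\sum_i \mathrm{Prob}_{g}(P_i\cap P_i^g\neq 1)<1$ for all large $n$, so that a random $x$ works. This is non-constructive but sidesteps the combinatorics of simultaneously aligning block systems across all primes. For metanilpotent groups of odd order your suggestion to build $x$ from its image in $G/F(G)$ points in the wrong direction: the paper proves the sharper claim that one may take $x\in F(G)$ (Theorem~\ref{thMetaNilp}). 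The proof reduces to $\Phi(G)=1$, handles the case $F(G)=O_p(G)$ directly via Lemma~\ref{lem:key}, and otherwise inducts on two smaller quotients, splicing the resulting elements together via Lemma~\ref{lemIntCen} and the observation that $C_Q(ab)=C_Q(a)\cap C_Q(b)$ when $a,b$ commute and have coprime orders. Your outline of ``coprime action and conjugacy of Hall subgroups'' is too coarse to see this mechanism.
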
 

This is a ``synchronization'' statement,
where the crucial property of the element $x$ is that it does not depend on $i$.
It is easy to see that Conjecture \ref{conj:A} holds if $G$ acts transitively on $\DS(G)$.
For a prime $p$, the $p$-core $O_p(G)$ is the intersection of all the Sylow $p$-subgroups of $G$.
We say that a finite group $G$ has $(*)_p$
if there exist two Sylow $p$-subgroups with intersection in $O_p(G)$,
and that $G$ has $(*)$ if it has $(*)_p$ for all $p$.

\begin{conjecture}[Good case] \label{conj:B}
Let $G$ be a finite group with $(*)$ and let $(P_i)_{i=1}^n$ be Sylow subgroups for distinct primes $p_1,\ldots,p_n$.
Then there exists $x \in G$ such that $P_i \cap P_i^x = O_{p_i}(G)$, for all $i=1,\ldots,n$.
\end{conjecture}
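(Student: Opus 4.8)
The plan is to recast the statement as a non-covering problem. First I would record what $(*)$ buys us: since $O_{p_i}(G)$ lies in every Sylow $p_i$-subgroup, one always has $O_{p_i}(G) \subseteq P_i \cap P_i^g$, so $(*)_{p_i}$ says exactly that the minimal value of $|P_i \cap P_i^g|$ is $|O_{p_i}(G)|$. By Sylow's theorem every Sylow $p_i$-subgroup equals $P_i^g$ for some $g$, hence the ``good set''
\[
B_i \;=\; \{\, g \in G : P_i \cap P_i^g = O_{p_i}(G)\,\}
\]
is nonempty. Moreover $B_i$ is a union of $(N_G(P_i),N_G(P_i))$-double cosets: multiplying $g$ on either side by an element of $N_G(P_i)$ fixes $P_i$ and preserves the order of $P_i \cap P_i^g$, and it fixes the normal subgroup $O_{p_i}(G)$. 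Conjecture~\ref{conj:B} is precisely the assertion that $\bigcap_{i=1}^n B_i \neq \emptyset$, equivalently that the bad sets $A_i = G \setminus B_i$ do not cover $G$.

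Second, I would note that this refines the non-covering theorem for Sylow normalizers established earlier in the paper. If $P_i$ is non-normal then every $g \in N_G(P_i)$ gives $P_i \cap P_i^g = P_i \supsetneq O_{p_i}(G)$, so $N_G(P_i) \subseteq A_i$ and hence $\bigcup_i A_i \supseteq \bigcup_i N_G(P_i)$ (primes with a normal Sylow impose no condition, since then $B_i = G$). The task is therefore to push the non-covering conclusion outward from the normalizers to the larger double-coset sets $A_i$. Quantitatively one has $|A_i|/|G| = 1 - k_i/n_i$, where $n_i = |\Syl_{p_i}(G)|$ and $k_i$ is the number of Sylow $p_i$-subgroups meeting $P_i$ in exactly $O_{p_i}(G)$, and a union bound finishes whenever $\sum_i(1 - k_i/n_i) < 1$.

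Third, for the structured regimes I would induct on $|G|$ via the cores: passing to $\overline G = G/O_{p_i}(G)$ turns the requirement $P_i \cap P_i^x = O_{p_i}(G)$ into $\overline{P_i} \cap \overline{P_i}^{\,\overline x} = 1$, so I may assume the relevant cores are trivial. In the metanilpotent odd-order case I would exploit solvability together with the nilpotent radical $F = F(G)$ and the nilpotent quotient $G/F$: nilpotence of each layer lets the primes be treated one at a time (a nilpotent group being the direct product of its Sylow subgroups), and coprime-action and fixed-point arguments, reinforced by oddness, should assemble the per-prime good elements into a single $x$. For symmetric and alternating groups of large degree I would instead build $x$ by hand from the explicit wreath-product structure of the Sylow subgroups, arranging minimality for all primes simultaneously.

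The main obstacle is synchronization across primes outside the solvable world. Each individual $B_i$ can be a small fraction of $G$ — pairwise Sylow intersections are not generically minimal — so the naive union bound on the $A_i$ fails, and one cannot simply patch together per-prime conjugators because the single element $x$ must be good for all $i$ at once. Extending the normalizer non-covering estimate to the genuinely larger sets $A_i$, i.e. producing one conjugating element that is simultaneously generic for every prime, is where the difficulty concentrates, and is presumably why the conjecture is reached only in the two extreme cases rather than in full generality.
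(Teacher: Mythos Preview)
The statement is Conjecture~\ref{conj:B}, which the paper explicitly leaves open: there is no proof in the paper to compare against. Only the special cases covered by Theorems~\ref{thSym} and~\ref{thMetaNilp} are established, and your final paragraph seems to acknowledge this. So what you have written is a strategic outline, not a proof, and should be read as such.

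Your recasting as a non-covering problem is correct and is exactly how Section~\ref{sec4} is organized: your sets $B_i$ are the paper's $\Gamma_G(P_i)$, and the union bound $\sum_i |A_i|/|G| < 1$ is inequality~\eqref{eqSym}. For $\Sym(n)$ and $\Alt(n)$ the paper follows this route, but not by ``building $x$ by hand from the wreath-product structure'' as you suggest; it bounds each bad probability via Proposition~\ref{propSym}, drawing on \cite{DGG+25,Ebe25,EG21}, and then sums.

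Your reduction ``so I may assume the relevant cores are trivial'' is not justified. Quotienting by a single $O_{p_i}(G)$ does convert the $p_i$-condition to $\overline{P_i}\cap\overline{P_i}^{\,\overline x}=1$, but it can strictly enlarge $O_{p_j}(G/O_{p_i}(G))$ for $j\neq i$ (already in $G=\Sym(3)$ with $p_i=3$, $p_j=2$), so a good $\overline x$ need not lift to a good $x$ for the other primes; iterating does not let you assume all cores vanish simultaneously. The paper's metanilpotent proof does not proceed this way. It passes to $G/\Phi(G)$, then runs a genuine two-stage argument: Lemma~\ref{lem:key} handles the case $F(G)=O_p(G)$, and in general one manufactures $x=a_0b_0\in F(G)$ as a product of commuting elements of coprime order, using Lemma~\ref{lemIntCen} together with $C_{Q_i}(a_0\hat b_i)=C_{Q_i}(a_0)\cap C_{Q_i}(\hat b_i)$ to synchronize the primes. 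That commuting-coprime trick is the actual mechanism, and it is absent from your sketch.
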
 

As explained in \cite[Th. B]{Zen96}, the class of groups with $(*)$ is quite rich.
In particular, all groups of odd order \cite{Ito58} and all finite simple groups \cite{MZ96} have $(*)$.
The typical group not having $(*)$ is $V \rtimes P$,
where $V=\mathbb{F}_3^2$ and $P \cong D_8$ is a $2$-subgroup of $\GL_2(3)$
(here $O_2(VP)=1$ but $P \cap P^v \neq 1$ for any $v \in V$).
In some sense Conjecture \ref{conj:B} is at the heart of Conjecture \ref{conj:A},
as the condition of minimality seems to be more of a technical issue.

The connection between Conjecture \ref{conj:A} and
the intersections of nilpotent subgroups is easy to explain.
If $H$ is a nilpotent subgroup of $G$, then $H = H_{p_1} \times \cdots \times H_{p_n}$,
where $H_{p_i}$ is a $p_i$-subgroup for each $i$.
If $x \in G$, note that $H^x = H_{p_1}^x \times \cdots \times H_{p_n}^x$, and
$$ H \cap H^x \> = \> 
(H_{p_1} \cap H_{p_1}^x) \times \cdots \times (H_{p_n} \cap H_{p_n}^x) . $$

The existence of an element $x$
as in Conjecture \ref{conj:A} can be used to settle a big portion of $H \cap H^x$ into the Fitting subgroup of $G$
(in particular $H \cap H^x \subseteq F(G)$ if and only if $H_{p_i} \cap H_{p_i}^x \subseteq O_{p_i}(G)$ for each $i$),
a problem which has been investigated by several authors including
Bialostocki, Dolfi, Herzog, Mann and Zenkov \cite{Bia75,Dol05,Her71,Man71,Zen94}
(see Section \ref{subSec2.2} for a summary of some of the known results).\\

We believe that proving Conjectures \ref{conj:A} and \ref{conj:B} would be marvelous,
but we have not been able to achieve this task.
However, we have made some progress in understanding the action of $G$ on $\DS(G)$.
First we prove that, in addition to the case $r=1$,
the action is transitive when only two primes divide the cardinality of the group.

\begin{theorem} \label{thTwoPrimes}
Let $G$ be a group of order $p^\alpha q^\beta$ for primes $p,q$ and $\alpha,\beta \geq 0$.
Then the action of $G$ on $\DS(G)$ is transitive.
\end{theorem}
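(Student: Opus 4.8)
The plan is to deduce the statement from the classical theory of Sylow systems in soluble groups. Write $|G| = p^\alpha q^\beta$ and first dispose of the degenerate cases: if at most one of $p,q$ divides $|G|$, or if one of the two Sylow subgroups is normal, then $|\rho(G)| \le 1$ and transitivity on $\DS(G)$ is immediate from Sylow's theorem, as already noted in the Introduction. So I may assume that both $p$ and $q$ divide $|G|$, and in fact I will prove the formally stronger statement that $G$ acts transitively on the full Cartesian product $\Syl_p(G) \times \Syl_q(G)$; projecting away a factor whose Sylow subgroup is normal (hence a single point) then recovers transitivity on $\DS(G)$ in every case.

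The first key point is that, because only two primes are involved, \emph{every} pair $(P,Q) \in \Syl_p(G) \times \Syl_q(G)$ is a Sylow system (Sylow basis) of $G$. Indeed $P \cap Q = 1$ since $|P|$ and $|Q|$ are coprime, so $|PQ| = |P|\,|Q| = p^\alpha q^\beta = |G|$; as $PQ \subseteq G$ this forces $PQ = G$, and likewise $QP = G$, so $P$ and $Q$ are permutable. Being Sylow subgroups for the only two primes dividing $|G|$, the pair $\{P,Q\}$ is then by definition a Sylow system. In other words, the set of Sylow systems of $G$ coincides with the \emph{entire} product $\Syl_p(G) \times \Syl_q(G)$.

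The second ingredient is solubility together with the conjugacy of Sylow systems. By Burnside's $p^\alpha q^\beta$ theorem, $G$ is soluble, so a theorem of P.\ Hall applies: in a finite soluble group the Sylow systems form a single conjugacy class under $G$, where conjugacy of the pair $\{P,Q\}$ means precisely that there is $g \in G$ with $P^g = \overline P$ and $Q^g = \overline Q$ \emph{simultaneously}. Combining this with the previous paragraph, the single conjugacy class of Sylow systems is all of $\Syl_p(G) \times \Syl_q(G)$, which is exactly the assertion that $G$ acts transitively on this product, and hence on $\DS(G)$.

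Within the two-prime setting there is essentially no obstacle once the right object is identified: the whole argument hinges on the observation of the second paragraph, that with only two primes permutability of the chosen Sylow subgroups is automatic, so the Sylow systems fill out the whole product. This is exactly what fails for three or more primes, where a generic triple of Sylow subgroups need not be pairwise permutable and the Sylow systems form a proper, conjugation-invariant subset of the product; that is the real reason transitivity cannot persist and Conjecture \ref{conj:A} must instead be approached through the finer minimality condition. It is worth recording the equivalent reformulation that guides the general problem: transitivity on $\Syl_p(G)\times\Syl_q(G)$ is the same as saying that $N_G(P)$ acts transitively on $\Syl_q(G)$ for one (equivalently every) $P \in \Syl_p(G)$, since one first moves the first coordinate into place by Sylow's theorem and then adjusts the second inside the stabiliser $N_G(P)$.
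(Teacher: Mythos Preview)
Your argument is correct, but it takes a different route from the paper. The paper proves Lemma~\ref{lemSyn}: whenever $G=N_G(P_1)N_G(P_2)$ for Sylow subgroups $P_1,P_2$ for distinct primes, then for any targets $Q_1,Q_2$ there are at least $|N_G(P_1)\cap N_G(P_2)|$ elements $x$ with $P_i^x=Q_i$; the proof is a two-line coset computation using that every right coset of $N_G(P_2)$ meets $N_G(P_1)$. Theorem~\ref{thTwoPrimes} then follows instantly from $G=P_1P_2$. By contrast, you invoke Burnside's $p^\alpha q^\beta$ theorem for solubility and then P.~Hall's conjugacy of Sylow systems, having observed that with only two primes every pair $(P,Q)$ is automatically a Sylow system. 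Your route is conceptually appealing and explains \emph{why} three primes break the statement (generic triples need not be pairwise permutable), but it imports substantial machinery---Burnside's theorem uses character theory---whereas the paper's argument is completely elementary, yields a quantitative count of synchronizing elements, and isolates the more flexible hypothesis $G=N_G(P_1)N_G(P_2)$ rather than the order condition.
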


This does not extend to three primes:
we can take $V \rtimes K$ where $V$ is a faithful irreducible $K$-module
and $K$ is nilpotent but not a $p$-group.
(Choosing two Sylow subgroups in the same complement $K$, both their normalizers will be $K$.)
It is natural to ask if a weaker property holds in general.
For example, given $(P_1,\ldots,P_r) \in \DS(G)$,
can we find an element $x \in G$ that moves all of the coordinates?
Note that this is equivalent to the condition $\bigcup_{i=1}^r N_G(P_i) \neq G$.

\begin{theorem} \label{thUnion} 
Let $G$ be a finite group and let $(P_i)_{i=1}^n$ be non-normal Sylow subgroups for distinct primes.
Then $\bigcup_{i=1}^n N_G(P_i) \neq G$.
\end{theorem}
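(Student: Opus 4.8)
The plan is to recast the statement as a positivity assertion and attack it by inclusion–exclusion on the Sylow normalizers. Writing $A_i = N_G(P_i)$ and reading each condition $x \in A_i$ as an event under the uniform distribution on $G$, the claim $\bigcup_i A_i \neq G$ is exactly the positivity of
$$ \frac{|G \setminus \bigcup_{i=1}^n A_i|}{|G|} \;=\; \sum_{S \subseteq \{1,\dots,n\}} (-1)^{|S|}\, \frac{1}{o_S}, \qquad o_S := \Big[\,G : \bigcap_{i \in S} N_G(P_i)\,\Big], $$
with $o_\emptyset = 1$. First I would record three structural facts about the indices $o_S$. (1) Since the orbit of the tuple $(P_i)_{i\in S}$ under diagonal conjugation embeds into $\bigtimes_{i\in S}\Syl_{p_i}(G)$ and has size $o_S$, we get $o_S \le \prod_{i \in S} n_{p_i}$, where $n_{p_i}=[G:N_G(P_i)]$. (2) Because $\bigcap_{i\in S} A_i \le \bigcap_{i\in T} A_i$ for $T \subseteq S$, we have the divisibility $o_T \mid o_S$. (3) Non-normality of $P_i$ forces $n_{p_i} \equiv 1 \pmod{p_i}$ with $n_{p_i}>1$, hence $n_{p_i}\ge p_i+1\ge 3$. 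I would also note at the outset that $n\ge 3$, since a group is never the union of two proper subgroups.

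The easy regime is where the first-order (union) bound suffices: if $\sum_{i} 1/n_{p_i} < 1$, then $|\bigcup_i A_i| \le \sum_i |A_i| = |G|\sum_i 1/n_{p_i} < |G|$, and we are done. Using fact (3), this already settles the case of few primes, or whenever even one Sylow number is large. So the whole difficulty is concentrated in the opposite regime, and I expect this to be the main obstacle: many small primes with minimal Sylow numbers can make the first-order bound vacuous, since the minimal values $3,4,6,8,12,14$ attached to $2,3,5,7,11,13$ already sum past $1$. There one must show that the full alternating sum above stays strictly positive, i.e. that the (possibly negative) first-order term is absorbed by the higher intersections, which facts (1) and (2) force to be correspondingly large. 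Isolating and proving this purely arithmetic positivity statement from the constraints $o_T \mid o_S$, $\,o_S \le \prod_{i\in S} n_{p_i}$, and $n_{p_i}\ge p_i+1$ is, I anticipate, the technical heart of the argument.

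Should the direct inclusion–exclusion estimate prove too delicate, the fallback is a minimal-counterexample reduction. Assume $G=\bigcup_{i=1}^n N_G(P_i)$ with $|G|$ minimal, and pass to $\overline G = G/M$ for a minimal normal subgroup $M$, with $\overline{P_i}=P_iM/M \in \Syl_{p_i}(\overline G)$. If every $\overline{P_i}$ is non-normal in $\overline G$, then $|\overline G|<|G|$ yields by induction an element $\overline x \notin \bigcup_i N_{\overline G}(\overline{P_i})$, and this lifts: any $x\in N_G(P_i)$ satisfies $\overline x \in N_G(P_i)M/M \subseteq N_{\overline G}(\overline{P_i})$, so a preimage $x$ of $\overline x$ lies in no $N_G(P_i)$, a contradiction. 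The reduction stalls only at the "bad" primes for which $\overline{P_i}$ becomes normal, equivalently $P_iM \trianglelefteq G$; there a Frattini argument gives $G = M\,N_G(P_i)$ and $n_{p_i}\mid |M|$, pinning these primes to the structure of $M$ (elementary abelian, or a direct product of nonabelian simple groups). Controlling precisely these bad primes, and in particular bounding how many small primes can simultaneously be bad, is where I would expect the real work to lie, and it dovetails with the arithmetic positivity needed in the counting approach.
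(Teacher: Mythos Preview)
Your proposal contains two sketches, neither of which you carry to completion, and the gaps you flag are real. The inclusion--exclusion route stalls exactly where you say it does: the ``arithmetic positivity'' of the alternating sum $\sum_S (-1)^{|S|}/o_S$ under the constraints $o_T \mid o_S$, $o_S \le \prod_{i\in S} n_{p_i}$, $n_{p_i}\ge p_i+1$ is never established, and there is no indication it follows from those constraints alone. Your fallback reduction is closer in spirit to the paper's argument, but the choice of quotient---by an arbitrary minimal normal subgroup $M$---is what manufactures the ``bad primes'' obstruction, and you leave that obstruction open.

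The missing ingredient is a lemma of Bryce, Fedri and Serena (Lemma~\ref{lemBFS}): if $G=\bigcup_{i=1}^n H_i$ is an irredundant cover and $p$ is a prime with $p\ge n$, then every $p$-element of $G$ lies in $\bigcap_{i=1}^n H_i$. Since the primes $p_1<\cdots<p_n$ are distinct, $p_n>n$, and $p_n$ divides $|G|$; so every $p_n$-element lies in $\bigcap_i N_G(P_i)$. Now quotient not by a minimal normal subgroup but by $K$, the normal subgroup generated by all $p_n$-elements. Because $K\subseteq N_G(P_i)$ for every $i$, the Frattini argument gives: if $KP_i\trianglelefteq G$ then $G=(KP_i)N_G(P_i)=N_G(P_i)$, contradicting non-normality of $P_i$. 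Hence \emph{no} prime becomes bad in $G/K$, and the induction on $|G|$ goes through in one line. Your difficulty with bad primes vanishes once you pick the quotient so that the kernel sits inside every normalizer, and the BFS lemma is precisely what furnishes such a kernel.
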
 

The hypothesis that the primes are distinct is essential:
$G = \Sym(4)$ can be realized as the union of all its Sylow normalizers.
The proof of Theorem \ref{thUnion}
is elementary
and exploits a nice lemma of Bryce, Fedri and Serena \cite{BFS95}.
The following corollary is immediate:

\begin{corollary}
If $|P_i:O_{p_i}(G)| \leq p_i$ for each prime $p_i$ and $P_i \in \Syl_{p_i}(G)$,
then there exists $x \in G$ such that $P_i \cap P_i^x = O_{p_i}(G)$ for all $i$.
\end{corollary}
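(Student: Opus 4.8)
The plan is to reduce the statement to Theorem \ref{thUnion} by showing that, under the index hypothesis, an element $x$ fails to realize the core $O_{p_i}(G)$ as the intersection $P_i \cap P_i^x$ precisely when it normalizes $P_i$.

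First I would dispose of the normal Sylow subgroups. If $P_i$ is normal in $G$, then $P_i = O_{p_i}(G)$ and $P_i \cap P_i^x = O_{p_i}(G)$ for every $x \in G$, so these coordinates impose no constraint and may be discarded. For the remaining indices $P_i$ is non-normal, so $P_i \neq O_{p_i}(G)$, and the hypothesis $|P_i:O_{p_i}(G)| \leq p_i$ forces $|P_i:O_{p_i}(G)| = p_i$, which is prime.

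The key step is the following equivalence for a non-normal $P_i$: one has $P_i \cap P_i^x = O_{p_i}(G)$ if and only if $x \notin N_G(P_i)$. Since $O_{p_i}(G) \trianglelefteq G$ is contained in every Sylow $p_i$-subgroup, we have $O_{p_i}(G) \subseteq P_i \cap P_i^x$ for all $x$. If $x \in N_G(P_i)$ then $P_i \cap P_i^x = P_i \supsetneq O_{p_i}(G)$. If instead $x \notin N_G(P_i)$ then $P_i^x \neq P_i$ while $|P_i^x| = |P_i|$, so $P_i \cap P_i^x$ is a \emph{proper} subgroup of $P_i$ containing $O_{p_i}(G)$; as $|P_i:O_{p_i}(G)| = p_i$ is prime there is no subgroup strictly between the two, whence $P_i \cap P_i^x = O_{p_i}(G)$.

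Finally I would collect the non-normal members of the family $(P_i)$; these are non-normal Sylow subgroups for distinct primes, so Theorem \ref{thUnion} applies and gives $\bigcup_i N_G(P_i) \neq G$, the union running over the non-normal indices. Choosing any $x \in G \setminus \bigcup_i N_G(P_i)$ then yields $P_i \cap P_i^x = O_{p_i}(G)$ for every non-normal $P_i$ by the equivalence, and trivially for the normal ones. I do not anticipate a genuine obstacle: the whole argument is elementary once the primality of the index is exploited, which is exactly why the statement is recorded as a corollary. The only point needing a word of care is the degenerate case in which every $P_i$ is normal, where the union is empty and any $x$ (for instance $x = 1$) works.
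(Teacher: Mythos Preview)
Your argument is correct and is exactly the intended one: the paper records the corollary as ``immediate'' from Theorem~\ref{thUnion}, and your reduction via the prime index $|P_i:O_{p_i}(G)|=p_i$ is precisely how that immediacy is realized.
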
 

In Section \ref{sec4}, we establish Conjecture \ref{conj:A} in some special cases.
First, using a probabilistic argument, we deal with the symmetric and alternating groups of large degree.

\begin{theorem} \label{thSym}
There exists $n_0$ such that, if $G=\Sym(n)$ or $\Alt(n)$ for $n \geq n_0$,
and $(P_i)_{i \geq 1}$ are Sylow subgroups for distinct primes,
then there exists $x \in G$ such that $P_i \cap P_i^x =1$ for all $i$.
\end{theorem}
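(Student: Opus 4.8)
The plan is to fix the (finitely many) primes $p_1<p_2<\cdots$ dividing $|G|=n!$, take $x\in G$ uniformly at random, bound $\Pr_x[P_i\cap P_i^x\neq 1]$ for each $i$ by a first‑moment estimate, and combine these by a union bound over the primes; if the total stays below $1$ for large $n$, then a random (hence some) $x$ satisfies $P_i\cap P_i^x=1$ for all $i$ at once. Since $O_p(\Sym n)=O_p(\Alt n)=1$ for $n\ge 5$, trivial intersection is exactly the minimal case. Because a nontrivial $p$‑group contains an element of order $p$, the event $P_i\cap P_i^x\neq 1$ is the event that $P_i\cap P_i^x$ contains an order‑$p_i$ element; writing $\mathcal O_{p}$ for the set of order‑$p$ elements of $P_i$ and using that $\sigma\in P_i^x\iff\sigma^{x^{-1}}\in P_i$ with $\sigma^{x^{-1}}$ equidistributed on $\sigma^G$,
\[
\Pr_x[P_i\cap P_i^x\neq 1]\;\le\;\sum_{\sigma\in\mathcal O_{p_i}}\frac{|\sigma^G\cap P_i|}{|\sigma^G|}\;=\;\sum_{j\ge1}\frac{\bigl(I^{(p_i)}_j\bigr)^2}{|C^{(p_i)}_j|},
\]
where $I^{(p)}_j$ counts the elements of $P_i$ that are products of $j$ disjoint $p$‑cycles and $|C^{(p)}_j|=n!/\bigl((n-pj)!\,p^{\,j}j!\bigr)$. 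It is essential to restrict to order exactly $p$ (not all nontrivial elements): the larger sum need not be below $1$.

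For an odd prime the leading term is a single $p$‑cycle, of which $\Syl_p(\Sym n)$ contains about $n$ while the class has size about $n^p/p$, giving roughly $p/n^{p-2}$; summing the resulting rapidly‑decaying series over $j$ and then over odd primes yields $\sum_{p\ge3}O(p/n^{p-2})=O(1/n)\to0$. Thus the odd primes contribute $o(1)$ and all the difficulty sits at $p=2$.

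For $p=2$ with $G=\Sym(n)$ and $n=2^k$ (the extremal case), let $I_j:=I^{(2)}_j$ be the number of involutions of $W_k:=\Syl_2(\Sym(2^k))=C_2\wr\cdots\wr C_2$ with exactly $j$ transpositions. The wreath decomposition $W_k=(W_{k-1}\times W_{k-1})\rtimes C_2$ gives, for $g_k(t)=\sum_j I_j t^j$ (identity counted as $t^0$),
\[
g_k(t)=g_{k-1}(t)^2+|W_{k-1}|\,t^{\,2^{k-1}},\qquad g_1(t)=1+t,
\]
because the non‑identity ``top‑bit'' involutions are precisely the fixed‑point‑free elements $(\tau,\tau^{-1};s)$ with $\tau\in W_{k-1}$, each a product of $2^{k-1}$ transpositions. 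From the recursion $I_j\sim\binom{2^{k-1}}{j}\sim n^j/(2^j j!)$ for fixed $j$, so $(I_j)^2/|C^{(2)}_j|\to 1/(2^j j!)$ and
\[
\sum_{\substack{\sigma\in P_2\\ \sigma^2=1}}\frac{|\sigma^G\cap P_2|}{|\sigma^G|}\;\longrightarrow\;\sum_{j\ge1}\frac{1}{2^j j!}\;=\;e^{1/2}-1\;\approx\;0.649\;<\;1 .
\]
With the $o(1)$ odd‑prime contribution this keeps the union bound below $1$ for all large $n$. For $\Alt(n)$ the involutions of a Sylow $2$‑subgroup are even, so only even $j$ occur and the limit improves to $\cosh(1/2)-1\approx0.128$.

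The main obstacle is to upgrade these term‑by‑term asymptotics into an honest bound $<1$ that is uniform in $n$. First, one needs a summable dominating estimate for $(I_j)^2/|C^{(2)}_j|$ valid for every $j$ and $n$, controlling the intermediate and tail ranges (where $|C^{(2)}_j|$ eventually overwhelms even the crude $I_j\le|W_k|=2^{\,n-1}$), so that sum and limit may be interchanged and $e^{1/2}-1$ is a genuine upper bound. Second, one must pass from $n=2^k$ to arbitrary $n=\sum_m 2^{k_m}$, where the Sylow $2$‑subgroup is $\prod_m W_{k_m}$ with involution generating function $\prod_m g_{k_m}(t)$; the point to verify is that no $n$ produces more small‑support involutions than the extremal $n=2^k$, so the inequality persists. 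Once these uniform estimates are established, the union bound over primes closes the argument and yields an explicit $n_0$.
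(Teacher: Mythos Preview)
Your high-level strategy coincides with the paper's: bound $\sum_i \Pr_x[P_i\cap P_i^x\neq 1]$ below $1$ by treating $p=2$ separately from the odd primes and invoking a union bound. The paper's inequality~(\ref{eqSym}) is exactly your starting point.

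Where you diverge is in how the per-prime estimates are obtained. For odd $p$ the paper does \emph{not} run a first-moment count over conjugacy classes of order-$p$ elements; instead it passes (via \cite[Lem.~4.7]{DGG+25}) to the probability that $\langle x,x^g\rangle$ fails to contain $\Alt(n)$ for a fixed fixed-point-free $p$-element $x$, and then invokes the random-generation estimates of Eberhard--Garzoni \cite{EG21} on $\mathbb{E}[N_k]$, obtaining a uniform $O(1/n)$ per odd prime and hence $O(\pi(n)/n)$ in total. Your direct class-by-class bound $\sum_j (I_j^{(p)})^2/|C_j^{(p)}|$ is more elementary and, if the uniform-in-$j$ control you flag as the ``main obstacle'' is carried out, gives a sharper $O(p/n^{p-2})$ per prime and $O(1/n)$ in total; but that tail control (especially over the intermediate range of $j$ in the iterated wreath product) is real work that the paper sidesteps by quoting \cite{EG21}. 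For $p=2$ the paper simply cites Eberhard \cite{Ebe25} for the exact limit $1-e^{-1/2}\approx 0.39$ of the \emph{probability}, whereas you compute the limit $e^{1/2}-1\approx 0.649$ of the \emph{first moment} via the recursion for $g_k(t)$; both constants are below $1$, so either suffices once uniformity is established. In short: same architecture, but your implementation is self-contained and combinatorial while the paper outsources the hard estimates to \cite{Ebe25,EG21,DGG+25}; the price you pay is the dominated-convergence/uniformity step you correctly identify as still owed.
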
 

To prove Theorem \ref{thSym}, we first obtain a refinement of \cite[Th. 1.1]{DGG+25} (Proposition \ref{propSym}).
This is achieved by using \cite{Ebe25} and some attention for odd primes.
Very recently, Burness and Huang \cite{BH25} proved Conjecture \ref{conj:A}
for all non-alternating simple groups,
completing the proof of a beautiful conjecture of Vdovin \cite[Prob. 15.40]{Kou22}:
if $G$ is simple and $H$ is a proper nilpotent subgroup,
then there exists $x \in G$ such that $H \cap H^x=1$.

Our final result establishes Conjecture \ref{conj:A} for metanilpotent groups of odd order.
This is a natural first place to look for a counterexample since the set of elements with the desired
property can be very small and the probabilistic approach is ineffective (see Remark \ref{remSmallGS}).
In order to apply an inductive argument, we prove a stronger statement that is false in the general case.

\begin{theorem} \label{thMetaNilp}
Let $G$ be a metanilpotent group of odd order
and let $(P_i)_{i=1}^n$ be Sylow subgroups for distinct primes $p_1,\ldots,p_n$.
Then there exists $x \in F(G)$ such that $P_i \cap P_i^x = O_{p_i}(G)$ for all $i$.
\end{theorem}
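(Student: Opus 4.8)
The plan is to translate the statement into a question about centralizers inside $F=F(G)$, and then to settle the resulting simultaneous condition by induction on $|G|$, using the strengthening $x\in F(G)$ as the inductive hypothesis.

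First I would fix the structure. As $G$ is metanilpotent, $G/F$ is nilpotent, so for each prime $p$ the image $PF/F$ is the normal Sylow $p$-subgroup of $G/F$; hence $G_p:=PF\trianglelefteq G$. Writing $V_p:=\prod_{q\ne p}O_q(G)$ for the $p'$-part of $F$, one checks $P\cap F=O_p(G)$, $P\cap V_p=1$, and $G_p=V_p\rtimes P$, with $O_p(G)=C_P(V_p)$ equal to the kernel of the action of $P$ on $V_p$ and every Sylow $p$-subgroup of $G_p$ of the form $P^v$ with $v\in V_p$. The backbone is then the elementary identity that in a semidirect product $V\rtimes P$ one has $P\cap P^v=C_P(v)$ for all $v\in V$: for $g\in P$ we may write $vgv^{-1}=(v\cdot gv^{-1}g^{-1})\,g$ with the first factor in $V$, so $vgv^{-1}\in P$ exactly when $g$ centralizes $v$. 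Applying this in $G_{p_i}=V_i\rtimes P_i$, and using that the $O_{p_i}(G)$-component of any $x\in F$ lies in $P_i$ and so fixes $P_i$, I obtain, for $x=\prod_q x_q$ decomposed along $F=\prod_q O_q(G)$,
\[
P_i\cap P_i^x \;=\; C_{P_i}\Big(\textstyle\prod_{q\ne p_i}x_q\Big)\;=\;\bigcap_{q\ne p_i}C_{P_i}(x_q).
\]
Since $O_{p_i}(G)=C_{P_i}(V_i)=\bigcap_{q\ne p_i}C_{P_i}(O_q(G))$, the theorem becomes the internal statement: there is $x=\prod_q x_q\in F$ with $\bigcap_{q\ne p_i}C_{P_i}(x_q)=\bigcap_{q\ne p_i}C_{P_i}(O_q(G))$ for all $i$; equivalently, the $V_i$-projection of $x$ lies in a regular orbit of the faithful coprime action of $P_i/C_{P_i}(V_i)$ on $V_i$.

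For one prime this is solvable: $G_{p_i}$ has odd order, so by Ito's theorem it has $(*)_{p_i}$, which through the identity above is exactly the existence of a regular $v_i\in V_i$. The whole difficulty is simultaneity. Note this is genuinely a statement about the product $V_i=\prod_{q\ne p_i}O_q(G)$ and cannot be decoupled prime by prime: a regular vector for the $V_i$-action need not be regular on any single factor $O_q(G)$, so one cannot hope to choose each component $x_q$ independently. I would therefore induct on $|G|$, exploiting that the conclusion is required with $x\in F(G)$. Choose a minimal normal subgroup $M\le O_q(G)$ (an elementary abelian $q$-group) and pass to $\overline G=G/M$, again metanilpotent of odd order; induction gives $\overline x\in F(\overline G)$ solving all conditions modulo $M$. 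When $F(\overline G)=F(G)/M$ this lifts to $x_0\in F(G)$ for which each $C_{P_i}(x_0^{(i)})$ is correct modulo $M$, and it remains to adjust the $O_q(G)$-component of $x_0$ by some $m\in M$ to restore exact equality. Because $M$ enters only the conditions with $p_i\ne q$, those actions are coprime and $M$ is a semisimple $P_i$-module; I would decompose $M=[M,P_i]\oplus C_M(P_i)$ and use oddness and coprimality to choose a single $m$ realizing the required trivial stabilizers, the correction being an affine condition compatible across the $i$ with $p_i\ne q$.

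The main obstacle is precisely the hypothesis $F(\overline G)=F(G)/M$. In a metanilpotent group the Fitting subgroup can grow under quotients: $F(G)/M$ may be strictly smaller than $F(G/M)$, which happens exactly when some Sylow becomes nilpotent modulo $M$, i.e. when $\overline P_i$ becomes normal in $\overline G$ for some $i$. In that case the inductive element need not lie in $F(G)$, and the quotient carries no information about $P_i\cap P_i^x$, so the entire content for that prime is concentrated in the $M$-layer. I expect this to be the technical heart: one must treat those primes separately, producing the regular element directly inside the relevant coprime module via Ito's $(*)$, and it is here that metanilpotency and odd order are used in an essential way. Controlling this Fitting-growth case, and checking that the $M$-layer corrections for different primes can be made by one common $m$, is where I anticipate the real work to lie.
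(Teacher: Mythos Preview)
Your centralizer reformulation is correct and is precisely the viewpoint the paper adopts (it is Lemma~\ref{lemIntCen} there). However, your inductive scheme has a genuine gap. In the ``good case'' $F(G/M)=F(G)/M$ you have in fact overcomplicated matters: no $m$-correction is needed at all. If $p_i\neq q$ then $P_i\cap P_i^{x_0}$ is a $p_i$-group contained in $MP_i\cap MP_i^{x_0}\subseteq O_{p_i}(G)M=O_{p_i}(G)\times M$, hence already in $O_{p_i}(G)$; and if $p_i=q$ then $M\subseteq P_i\cap P_i^{x_0}$ so $(P_i\cap P_i^{x_0})/M=\overline{P_i}\cap\overline{P_i}^{\overline{x_0}}=O_q(G)/M$ gives equality directly. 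Thus the good case is empty of content, and the entire theorem is concentrated in what you call the ``bad case'', for which you give no argument. That case is unavoidable: already for $G=C_p\rtimes C_q$ one has $M=F(G)=C_p$ and $F(G/M)=G/M$, so your induction yields nothing. Your suggestion to ``treat those primes separately, producing the regular element directly inside the relevant coprime module via It\^o'' is exactly the problem statement restated, since simultaneity is what is at stake.

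The paper circumvents the Fitting-growth obstacle by a different inductive scheme. After reducing to $\Phi(G)=1$, it fixes one prime $p=p_1$ with $O_p(G)\neq 1$, writes $F=A\times B$ with $A=O_p(G)$ and $B=O_{p'}(F)$, and builds $x=a_0b_0$ in two pieces. The element $a_0\in A$ comes from a key lemma (Lemma~\ref{lem:key}) applied to $G/O_{p'}(G)$: there $F$ is a Sylow $p$-subgroup, and Bialostocki's theorem on nilpotent Hall subgroups in odd-order groups gives a single element of the complement $K$ with $K\cap K^x=1$, which transfers to all $P_i$ via the transitivity of $A$ and the abelian trick $P_i=K_i^{b_i}$, $b_ix=xb_i$. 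The element $b_0\in B$ comes from the inductive hypothesis applied not to a quotient of $G$ but to the proper \emph{subgroup} $O_{p'}(G)P_1/A$, whose Fitting subgroup is checked to be exactly $F/A\cong B$. The two pieces are then glued by the identity $C_{Q_i}(a_0\hat b_i)=C_{Q_i}(a_0)\cap C_{Q_i}(\hat b_i)$ for commuting elements of coprime order, which is the same centralizer splitting you wrote down. So the missing idea in your proposal is this two-stage construction along a single prime, with Lemma~\ref{lem:key} (ultimately Bialostocki) supplying the base case; quotienting by an arbitrary minimal normal subgroup does not give enough control over $F$.
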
 

The proof of Theorem \ref{thMetaNilp}
contains ideas that could be useful in a more general setting.
A key fact we use is that, if $P=Q^a$ and $Q \cap Q^b =1$ for some commuting elements $a$ and $b$,
then $P \cap P^b =1$.

\vspace{0.1cm}
\section{$\DS(G)$ and nilpotent subgroups} \label{sec2}

\subsection{The action on $\DS(G)$}

All groups in this paper are finite.
For a finite group $G$ and a set of primes $\pi$, $O_\pi(G)$ is the largest normal $\pi$-subgroup of $G$.
The Fitting subgroup $F(G) = \prod_p O_p(G)$ is the largest nilpotent normal subgroup,
and the hypercenter $\mathcal{H}(G)$ is the last term of the upper central series of $G$.
(Equivalently, it is the smallest normal subgroup modulo which the center is trivial.)
It is clear that $\mathcal{H}(G) \subseteq F(G)$.

\begin{theorem}[Baer] \label{thHyper} 
Let $G$ be a finite group. Then $\mathcal{H}(G)$ coincides with
\begin{itemize} 
    \item[(i)] the intersection of the maximal nilpotent subgroups of $G$;
    \item[(ii)] the intersection of all the Sylow normalizers in $G$;
    \item[(iii)] the kernel of the action of $G$ on $\DS(G)$.
\end{itemize}
\end{theorem}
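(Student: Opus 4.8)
The plan is to establish the chain of equalities by first disposing of (ii)$\,=\,$(iii), which merely unwinds the definition, then proving that the common subgroup $K:=\bigcap_{P}N_G(P)$ (intersection over all Sylow subgroups of $G$, for every prime) equals $\mathcal{H}(G)$, and finally handling (i) by comparing the intersection of maximal nilpotent subgroups with $K$. For (ii)$\,=\,$(iii): the kernel of the action on $\DS(G)$ consists exactly of the elements normalizing every Sylow $p$-subgroup for each $p\in\rho(G)$, and for $p\notin\rho(G)$ the unique (normal) Sylow subgroup has normalizer $G$, so this kernel is precisely $\bigcap_{P}N_G(P)=K$.

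For the inclusion $\mathcal{H}(G)\subseteq K$, I would show by induction along the upper central series that $\mathcal{H}(G)$ normalizes every Sylow subgroup $P$. The base case is $Z(G)$: its $p$-part lies in $O_p(G)\subseteq P$, while its $p'$-part commutes with $P$, so $Z(G)\le N_G(P)$. Passing to $G/Z(G)$ and using a short Frattini-type argument—lifting the normalization of $\overline{P}$ back to $PZ(G)$, inside which $P$ is the unique Sylow $p$-subgroup—completes the induction.

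The heart of the proof, and the step I expect to be the main obstacle, is the reverse inclusion $K\subseteq\mathcal{H}(G)$. First I would check that $K$ is nilpotent: $K\trianglelefteq G$, and for $P\in\Syl_p(G)$ one has $P\cap K\in\Syl_p(K)$ with $P\trianglelefteq PK$ (since $K$ normalizes $P$), whence $P\cap K\trianglelefteq K$; thus every Sylow subgroup of $K$ is normal and $K\le F(G)$. It then suffices to prove that every $G$-chief factor $H/L$ inside $K$ is centralized by $G$, since this is exactly the condition $K\le\mathcal{H}(G)$. Such a factor is a $p$-group for some prime $p$. For $q\neq p$ and a $q$-element $x\in Q\in\Syl_q(G)$, the commutator $[H,x]$ lies in $H$ (a $p$-group) and also in $K\cap Q$ (a $q$-group, as $K$ normalizes $Q$), hence is trivial; so every $p'$-element centralizes $H/L$. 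Consequently $G/C_G(H/L)$ is a $p$-group acting on the elementary abelian $p$-group $H/L$, and because such an action has nontrivial fixed points while $H/L$ is $G$-minimal, minimality forces $C_G(H/L)=G$.

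Finally, for (i) let $D$ denote the intersection of the maximal nilpotent subgroups, which is normal and nilpotent. Extending any Sylow $p$-subgroup $P$ to a maximal nilpotent subgroup $M$, nilpotency of $M$ gives $P\trianglelefteq M$, hence $M\le N_G(P)$ and therefore $D\le K=\mathcal{H}(G)$. Conversely, for each maximal nilpotent $M$ the product $\mathcal{H}(G)M$ is nilpotent—being a hypercentral normal subgroup times a nilpotent subgroup—so maximality yields $\mathcal{H}(G)\le M$, and intersecting over all $M$ gives $\mathcal{H}(G)\le D$. This closes all three equalities.
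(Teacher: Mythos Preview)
The paper does not actually prove this theorem: it simply cites Baer's original article \cite{Bae53} for (i) and (ii) and remarks that (iii) follows from (ii). Your proposal, by contrast, supplies a complete self-contained argument, and the overall strategy---reducing (iii) to (ii), proving $\mathcal H(G)=K$ by a chief-factor analysis, then comparing $K$ with the intersection of maximal nilpotent subgroups---is sound and is essentially the standard route.

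There is one imprecision worth flagging in the step $K\subseteq\mathcal H(G)$. You take a $G$-chief factor $H/L$ inside $K$ and assert that ``$[H,x]$ lies in $H$ (a $p$-group)''. But $H$ itself need not be a $p$-group; only the quotient $H/L$ is. The repair is immediate: since you have already shown $K$ is nilpotent, its Sylow subgroups $K_p$ are characteristic in $K$ and hence normal in $G$, so it suffices to prove $K_p\le\mathcal H(G)$ for each prime $p$ separately. Running your chief-factor argument with $H\le K_p$, every $H$ is genuinely a $p$-group and the computation $[H,x]\le H\cap Q=1$ (for $x$ in a Sylow $q$-subgroup $Q$, $q\ne p$) goes through verbatim. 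Alternatively, keep $H\le K$ arbitrary and observe that $[H,Q]\le H\cap Q$ is a $q$-group whose image in the $p$-group $H/L$ must vanish, i.e.\ $[H,Q]\le L$; either way $G/C_G(H/L)$ is a $p$-group and your fixed-point argument finishes. The handling of (i) is correct: the key point that $\mathcal H(G)M$ is nilpotent follows because $Z_i(G)\cap H\le Z_i(H)$ for any subgroup $H\ge Z_i(G)$, so $\mathcal H(G)\le\mathcal H(\mathcal H(G)M)$, and a group that is nilpotent modulo its hypercenter is nilpotent.
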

\begin{proof}
See \cite[Cor. 3-4]{Bae53}.
Point (iii) follows from (ii).
\end{proof}

\begin{lemma} \label{lemRedHyp}
$F(G/\mathcal{H}(G)) = F(G)/\mathcal{H}(G)$.
\end{lemma}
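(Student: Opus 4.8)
The plan is to prove the two inclusions separately. Write $\mathcal{H}=\mathcal{H}(G)$ and $\overline{G}=G/\mathcal{H}$. Since $F(G)$ is characteristic in $G$ and contains $\mathcal{H}$ (as noted in the text), $F(G)/\mathcal{H}$ is a normal subgroup of $\overline{G}$, and it is nilpotent because it is a quotient of the nilpotent group $F(G)$; hence $F(G)/\mathcal{H}\subseteq F(\overline{G})$. This direction is immediate. The content of the lemma is the reverse inclusion $F(\overline{G})\subseteq F(G)/\mathcal{H}$. By the correspondence theorem I would write $F(\overline{G})=K/\mathcal{H}$ for a (unique) normal subgroup $K\trianglelefteq G$ with $\mathcal{H}\subseteq K$ and $K/\mathcal{H}$ nilpotent. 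Then the reverse inclusion is exactly the assertion that $K$ itself is nilpotent, for in that case $K\subseteq F(G)$ by definition of the Fitting subgroup, whence $K/\mathcal{H}\subseteq F(G)/\mathcal{H}$. So everything reduces to showing: if $K\trianglelefteq G$, $\mathcal{H}(G)\subseteq K$, and $K/\mathcal{H}(G)$ is nilpotent, then $K$ is nilpotent.

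The engine of the argument is the case where the hypercenter is just the centre. I would first prove: if $K\trianglelefteq G$, $Z(G)\subseteq K$, and $K/Z(G)$ is nilpotent of class $c$, then $K$ is nilpotent. Indeed $\gamma_{c+1}(K)\subseteq Z(G)$, and since also $\gamma_{c+1}(K)\subseteq K$ we get $\gamma_{c+1}(K)\subseteq Z(G)\cap K$. The key point is that an element of $Z(G)$ lying in $K$ commutes with everything in $K$, so $Z(G)\cap K\subseteq Z(K)$; therefore $\gamma_{c+1}(K)\subseteq Z(K)$ and $\gamma_{c+2}(K)=[\gamma_{c+1}(K),K]=1$, so $K$ is nilpotent of class at most $c+1$. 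This short step is the crux of the whole lemma and the only place where something beyond formal manipulation occurs. It is also where the proof genuinely \emph{uses} that we are factoring out a central (more generally, hypercentral) subgroup: the analogous statement fails for an arbitrary nilpotent normal subgroup, as one sees already with $G=\Sym(3)$ and $N=\Alt(3)$, where $G/N$ is nilpotent but $G$ is not. This is exactly why the lemma is phrased in terms of $\mathcal{H}(G)$ rather than, say, $F(G)$.

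Finally I would bootstrap from the centre to the full hypercenter by induction on the hypercentral length $m$, i.e. the least $m$ with $\mathcal{H}(G)=Z_m(G)$. The case $m=0$ is trivial. For $m\geq 1$ set $Z=Z(G)$, which is nontrivial; using the standard identity $Z_i(G/Z)=Z_{i+1}(G)/Z$ one obtains $\mathcal{H}(G/Z)=\mathcal{H}(G)/Z$, a hypercenter of length $m-1$, and the third isomorphism theorem gives $G/\mathcal{H}(G)\cong (G/Z)\big/\mathcal{H}(G/Z)$. Applying the centre case to pass from $G$ to $G/Z$ and the inductive hypothesis to $G/Z$ then yields $F\big((G/Z)/\mathcal{H}(G/Z)\big)=F(G/Z)/\mathcal{H}(G/Z)=(F(G)/Z)\big/(\mathcal{H}(G)/Z)=F(G)/\mathcal{H}(G)$, which after rewriting the left-hand side is the desired equality. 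Combining this with the easy inclusion from the first paragraph completes the proof. The only delicate ingredient is the centre case above; the rest is bookkeeping with the upper central series, and I expect no real obstacle there.
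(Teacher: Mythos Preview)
Your proof is correct and follows essentially the same route as the paper: first establish the centre case $F(G/Z(G))=F(G)/Z(G)$ by showing that the preimage $K$ of $F(G/Z(G))$ satisfies $Z(G)\subseteq Z(K)$ so that $K$ is nilpotent, and then induct along the upper central series using $\mathcal{H}(G/Z)=\mathcal{H}(G)/Z$. The paper's write-up is terser (it phrases the centre step as ``$Z(G)\subseteq F_0\cap C_G(F_0)=Z(F_0)$, so $F_0/Z(F_0)$ is nilpotent, hence $F_0$ is nilpotent'') but the content is identical to your lower-central-series computation.
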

\begin{proof}
We first notice that $F(G/Z(G))=F(G)/Z(G)$.
In fact, if $F_0/Z(G) = F(G/Z(G))$,
then $Z(G) \subseteq F_0 \cap C_G(F_0) = Z(F_0)$, and $F_0/Z(F_0)$ is nilpotent.
So $F_0$ itself is nilpotent, and $F_0=F(G)$.
The proof follows by the definition of $\mathcal{H}(G)$ and induction.
\end{proof}

In proving all the statements in this paper, we can always assume that we are working in $G/\mathcal{H}(G)$,
or equivalently that the (hyper)center of the group $G$ is trivial.

\begin{remark}
The action of $G$ on $\DS(G)$ is a special case of the action studied in the recent paper \cite{AMB24},
where the subgroups $H_i$'s are Sylow normalizers for distinct primes.
(Except that the $N_G(P_i)$'s are not core-free in general, but only the intersection of their cores is trivial.)
The relationship between the two articles ends here, because \cite{AMB24} goes in a different direction,
namely the search for regular orbits.
\end{remark}

\subsection{Intersection of nilpotent subgroups} \label{subSec2.2}

There is an extensive literature on the intersections of Sylow subgroups,
and nilpotent subgroups more generally, in finite groups.
There are also natural connections with other topics, such as the existence of regular orbits in linear groups
(see \cite{Har81}, for example).
 The latter connection arises from the following easy observation:

\begin{lemma} \label{lemIntCen}
Let $G=V \rtimes K$.
For each $v \in V$, we have $C_K(v) =K \cap K^v$.
\end{lemma}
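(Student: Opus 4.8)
The plan is to exploit the unique factorization in the semidirect product: every $g \in G$ can be written uniquely as $g = vk$ with $v \in V$ and $k \in K$, and crucially $V \trianglelefteq G$. Throughout I adopt the convention $K^v = v^{-1}Kv$. The proof then splits into the two inclusions, and the entire content lies in reading off the $V$-component of a single conjugate.

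First I would dispose of the easy inclusion $C_K(v) \subseteq K \cap K^v$. If $k \in K$ commutes with $v$, then $v^{-1}kv = k$, so $k$ lies in $K^v$ as well as in $K$; hence $k \in K \cap K^v$. For the reverse inclusion, take $x \in K \cap K^v$. From $x \in K^v$ we get $vxv^{-1} \in vK^vv^{-1} = K$, so both $x$ and $vxv^{-1}$ lie in $K$. The idea now is to compute the $VK$-decomposition of $vxv^{-1}$: since $V$ is normal, $xv^{-1}x^{-1} \in V$, and one has the identity $vxv^{-1} = (v \cdot xv^{-1}x^{-1})\,x$, whose first factor lies in $V$ and whose second factor $x$ lies in $K$. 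Because $vxv^{-1}$ already lies in $K$, uniqueness of the factorization forces the $V$-part $v \cdot xv^{-1}x^{-1}$ to be trivial, i.e. $xv^{-1}x^{-1} = v^{-1}$. Thus $x$ centralizes $v$, so $x \in C_K(v)$, which completes the argument.

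There is essentially no serious obstacle here: the statement is immediate once one commits to a conjugation convention and tracks which factor lands in $V$ and which in $K$. The only point requiring a little care is the bookkeeping in the decomposition of $vxv^{-1}$, where a slip in the convention would interchange the roles of $v$ and $v^{-1}$; this is harmless, since $C_K(v) = C_K(v^{-1})$. As a sanity check (and an alternative proof), one may argue geometrically: realizing $G$ as acting on $V \cong G/K$, the subgroup $K$ is the stabilizer of the base point while a conjugate $K^v$ is the stabilizer of a translated point, so their intersection consists exactly of the elements of $K$ fixing $v$ under the action of $K$ on $V$, which is precisely $C_K(v)$.
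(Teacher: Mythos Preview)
Your proof is correct and follows essentially the same route as the paper: both arguments use the trivial intersection $V\cap K=1$ to force a commutator element to be the identity. The paper phrases the reverse inclusion by writing $a=b^v$ with $a,b\in K$ and observing that $b^{-1}a=[b,v]$ lies in $K\cap V=1$, hence $a=b\in C_K(v)$; your version computes the $VK$-decomposition of $vxv^{-1}$ and reads off the same conclusion from uniqueness. These are cosmetic variants of one idea.
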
 
\begin{proof}
It is easy to see that $C_K(v) \subseteq K \cap K^v$.
Now suppose $a = b^v \in K \cap K^v$ for some $a,b \in K$ and $v \in V$.
Multiplying by $b^{-1}$ we obtain $b^{-1}a = b^{-1} v^{-1} b v$.
The left side lies in $K$, while the right side lies in $V$, so $a=b \in C_K(v)$. 
\end{proof} 

We now report two results that we briefly mentioned in Section \ref{sec1}.
Recall that a finite group $G$ has $(*)_p$
if there exist two Sylow $p$-subgroups with intersection in $O_p(G)$,
and that $G$ has $(*)$ if it has $(*)_p$ for all $p$.

\begin{theorem}[It\^o \cite{Ito58}] \label{thIto}
Every finite group of odd order has $(*)$.
\end{theorem}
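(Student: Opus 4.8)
The plan is to fix a prime $p$ and reduce immediately to the case $O_p(G)=1$. Since $O_p(G)$ lies in every Sylow $p$-subgroup, subgroups $P,Q\in\Syl_p(G)$ satisfy $P\cap Q=O_p(G)$ exactly when their images in $\bar G=G/O_p(G)$ intersect trivially; as $\bar G$ again has odd order and $O_p(\bar G)=1$, I may assume $O_p(G)=1$ and seek $P,Q\in\Syl_p(G)$ with $P\cap Q=1$. (If $G$ is a $p$-group the claim is vacuous, so I assume $P\neq G$.)

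Next I would exploit coprime action on the Fitting subgroup. Groups of odd order are solvable, so $C_G(F(G))\subseteq F(G)$; since $O_p(G)=1$, the subgroup $F(G)$ is a $p'$-group and $P\cap F(G)=1$, whence $F(G)P=F(G)\rtimes P$ and $P$ acts coprimely on $F(G)$. The action is faithful, because $C_P(F(G))\subseteq P\cap C_G(F(G))\subseteq P\cap F(G)=1$, and by the standard coprime fact $C_P(F(G)/\Phi(F(G)))=C_P(F(G))=1$, so $P$ acts faithfully on the semisimple module $\bar V:=F(G)/\Phi(F(G))$. Applying Lemma \ref{lemIntCen} inside $F(G)\rtimes P$ gives $P\cap P^v=C_P(v)$ for every $v\in F(G)$, so everything reduces to producing $v\in F(G)$ with $C_P(v)=1$; and since $C_P(v)\subseteq C_P(v\Phi(F(G)))$, it is enough to find $\bar v\in\bar V$ whose $P$-stabilizer is trivial, i.e. a regular orbit of $P$ on $\bar V$.

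The crux is thus the statement that a $p$-group of odd order acting faithfully and coprimely on a finite semisimple module has a regular orbit. This is exactly where oddness is essential: the statement fails for $p=2$, the minimal obstruction being $D_8\le\GL_2(3)$ on $\mathbb{F}_3^2$ (the example of Section \ref{sec1}), where the central involution acts as $-I$. I would prove the odd case by induction on $|P|$, first reducing to $P$ acting irreducibly (combining regular orbits across the homogeneous components) and then bounding the union $\bigcup_{|M|=p}C_{\bar V}(M)$ over the order-$p$ subgroups $M$ of $P$: each $C_{\bar V}(M)$ is a proper subspace, and the point is that for odd $p$ this union cannot cover $\bar V$. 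Controlling that union uniformly is the main obstacle, and it is precisely here that the absence of an order-two scalar — equivalently, the oddness of $|G|$ — enters. Granting the regular orbit, I lift $\bar v$ to $v\in F(G)$, obtain $C_P(v)=1$, and conclude $P\cap P^v=1=O_p(G)$ by Lemma \ref{lemIntCen}; as $p$ was arbitrary, $G$ has $(*)$.
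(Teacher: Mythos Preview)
The paper does not give its own proof of this statement; it is quoted as a known result with a reference to \cite{Ito58}. So there is no argument in the paper to compare against, and your proposal must stand on its own.

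Your reduction is sound: passing to $G/O_p(G)$, using solvability to get $C_G(F(G))\subseteq F(G)$ and hence a faithful coprime action of $P$ on the $p'$-group $F(G)$, and then invoking Lemma~\ref{lemIntCen} to translate $P\cap P^v=1$ into $C_P(v)=1$ is correct and is a standard modern route to It\^o's theorem. The passage to $\bar V=F(G)/\Phi(F(G))$ is also fine.

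The gap is that you do not prove the regular-orbit step, and you say so yourself: ``controlling that union uniformly is the main obstacle''. The sentence ``for odd $p$ this union cannot cover $\bar V$'' is the assertion to be established, not an argument for it. The naive count --- number of order-$p$ subgroups of $P$ times $|\bar V|/q$ --- does not obviously beat $|\bar V|$ without further input: one needs either a lower bound on $\dim\bar V$ in terms of $|P|$ coming from the representation theory of $p$-groups in coprime characteristic, or a genuinely inductive argument through normal subgroups of $P$. That an odd-order $p$-group acting faithfully and coprimely always has a regular orbit is true, but it is a theorem of roughly the same depth as It\^o's (see for instance \cite{Har81}), so invoking it here is circular in spirit. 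As written, your proposal is a correct reduction followed by a restatement of the difficulty; the essential work has been relocated rather than done. (Incidentally, It\^o's 1958 paper treats solvable groups directly and predates Feit--Thompson; your use of the Odd Order Theorem is legitimate but not how the original argument proceeds.)
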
 

\begin{theorem}[Mazurov-Zenkov \cite{MZ96}] \label{thMZ}
Every finite simple group has $(*)$.
\end{theorem}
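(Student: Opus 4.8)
The plan is to first turn the definition of $(*)$ into a concrete Sylow-intersection statement and then run the classification of finite simple groups. If $G$ is abelian simple it is cyclic of prime order $p$; its Sylow $p$-subgroup is $G$ itself, which is normal, so $O_p(G)=G$ and $G\cap G=O_p(G)$, while every other Sylow subgroup is trivial, so $(*)$ holds. Hence I assume $G$ is nonabelian simple. Then every proper normal subgroup is trivial, so $O_{p}(G)=1$ for all $p$, and $(*)_p$ says precisely that there exist two Sylow $p$-subgroups with trivial intersection, equivalently that some $g\in G$ satisfies $P\cap P^{g}=1$ for a fixed $P\in\Sylp(G)$. So the task reduces to: for every nonabelian simple $G$ and every prime $p\mid |G|$, produce $g$ with $P\cap P^{g}=1$. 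After this reduction I would split into the three families of the classification.

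\emph{Defining characteristic in Lie type.} If $G$ arises from a group of Lie type in characteristic $p$, a Sylow $p$-subgroup is a maximal unipotent subgroup $U$, the unipotent radical of a Borel $B=UT$. The opposite Borel $B^{-}=U^{-}T$ has unipotent radical $U^{-}$ conjugate to $U$ (by a lift of the longest Weyl element), so $U^{-}$ is again a Sylow $p$-subgroup, and the Bruhat decomposition — the big cell $U^{-}TU$ — forces $U\cap U^{-}=1$. This disposes of the defining characteristic uniformly and is the clean part of the argument.

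\emph{Alternating and symmetric groups.} For $G=\Alt(n)$ a Sylow $p$-subgroup $P$ is an iterated wreath product dictated by the base-$p$ digits of $n$, and its support has size $n-(n\bmod p)>n/2$ for every $p\mid |G|$. Two Sylow $p$-subgroups therefore always have overlapping support, so one cannot obtain trivial intersection by separating supports: the intersection must be annihilated despite the overlap. The point is to build an explicit conjugator $g$ that desynchronizes the two wreath structures block by block, so that no nontrivial $p$-element is simultaneously admissible for $P$ and $P^{g}$. This is exactly the combinatorial phenomenon quantified in Proposition \ref{propSym}, and I would feed that input, together with the base-$p$ block description and the parity bookkeeping needed to stay inside $\Alt(n)$, into the construction of $g$.

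\emph{Cross characteristic in Lie type, and sporadic groups.} This is where the real difficulty lies. For a prime $\ell$ different from the defining characteristic the Sylow $\ell$-subgroup has no transparent ``opposite'' description. In the generic semisimple case, where the Sylow $\ell$-subgroup is controlled by a maximal torus $T$, I would choose $g$ in the normalizer of a torus in general position so that $P\cap P^{g}$ is forced into $T\cap T^{g}$, which a Weyl-group and eigenvalue analysis shows to be trivial (e.g. distinct split tori in $\PSL_2(q)$ already intersect trivially). The obstacle is the torsion primes, where the Sylow $\ell$-subgroup is genuinely nonabelian and not torus-controlled, together with the small-rank exceptions: here the uniform mechanism breaks and one must argue through the $\ell$-local structure family by family. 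The finitely many sporadic groups I would settle directly from the known Sylow normalizers and fusion data in the ATLAS. I expect these cross-characteristic and sporadic verifications to be the bulk of the proof and its main obstacle.
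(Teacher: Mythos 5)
You should first be clear about what the paper itself does here: Theorem \ref{thMZ} is not proved in the paper at all. It is imported verbatim from Mazurov--Zenkov \cite{MZ96}, a classification-dependent theorem whose proof is a long case-by-case analysis, and the paper's ``proof'' is simply that citation. So the right comparison is with \cite{MZ96}, and against that standard your proposal is a program outline rather than a proof. What you do establish is correct: the reduction (for nonabelian simple $G$ one has $O_p(G)=1$, so $(*)_p$ becomes the existence of $g$ with $P\cap P^{g}=1$) is fine, and the defining-characteristic case via opposite unipotent radicals and the big cell, giving $U\cap U^{-}=1$, is the standard clean argument.

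The genuine gaps are exactly the parts you defer. In cross characteristic, the claim that a conjugator from a torus normalizer ``forces'' $P\cap P^{g}$ into $T\cap T^{g}$ is unsupported --- nothing confines the intersection of two Sylow $\ell$-subgroups to tori --- and in the generic case where the Sylow $\ell$-subgroup is abelian you do not need tori at all, since Brodkey's theorem \cite{Bro63} (quoted in Section \ref{subSec2.2}) already gives $(*)_\ell$; the actual content of \cite{MZ96} is the remaining analysis at torsion primes and in small rank, which you explicitly leave to be done ``family by family'' without doing it. For the alternating groups, appealing to Proposition \ref{propSym} only works for $n\geq n_0$ with $n_0$ unspecified (and that proposition itself rests on the heavy inputs \cite{Ebe25,EG21}), your promised ``explicit conjugator desynchronizing the wreath structures'' is never constructed, and all small degrees are unhandled --- note that in the paper the logical dependence runs the other way, with \cite{MZ96} invoked to assert $(*)$ for $n\geq 9$ before the asymptotic refinement is proved. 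The sporadic groups receive only a promissory ATLAS check. In short, two of your three classification branches, which is where the theorem actually lives, contain no argument; for a result of this magnitude the paper's treatment --- cite it and move on --- is the appropriate one, and a blind reproof would require supplying precisely the case analysis you omit.
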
 

See \cite{Bro63,Man75,Pas66,Rob84} for related work on the more specific property $(*)_p$.
In particular, Brodkey \cite{Bro63} showed that $(*)_p$
holds whenever the Sylow $p$-subgroups are abelian.

We now move to more general nilpotent subgroups.
Given a set of primes $\pi$ and a $\pi$-subgroup $H$ of $G$, we clearly have
$$ \CoreG(H) \subseteq O_\pi(G) . $$
If $H$ is nilpotent then
$$ \CoreG(H) \subseteq H \cap F(G) \subseteq O_\pi(F(G)) \subseteq O_\pi(G) . $$ 
If in addition $O_\pi(G) \subseteq H$, then
$$ \CoreG(H) = O_\pi(G) $$
and we have all equalities above.
The condition $O_\pi(G) \subseteq H$ is assured if $\pi=\{p\}$ and $H$ is a Sylow $p$-subgroup,
or if $G$ is solvable and $H$ is nilpotent Hall $\pi$-subgroup.
In fact, Bialostocki \cite{Bia75} generalized It\^o's theorem to nilpotent Hall subgroups of groups of odd order.

\begin{theorem}[Bialostocki] \label{thBialoHall}
Let $G$ be a group of odd order.
If $H \leqslant G$ is a nilpotent Hall $\pi$-subgroup, then there exists $x \in G$ such that $H \cap H^x = O_\pi(G)$.
\end{theorem}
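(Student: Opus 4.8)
The plan is to reduce the statement to a regular-orbit problem for $H$ acting on the Fitting subgroup, and then to solve that problem by a counting argument in which the oddness of $|G|$ is essential. First I would pass to $\overline{G}=G/O_\pi(G)$. Since $O_\pi(G)$ is a normal $\pi$-subgroup it lies in the Hall subgroup $H$, one checks $O_\pi(\overline{G})=1$, and $\overline{H}=H/O_\pi(G)$ is again a nilpotent Hall $\pi$-subgroup. If $\overline{H}\cap\overline{H}^{\,\overline{x}}=1$ for some $\overline{x}$, then lifting to $x\in G$ gives $H\cap H^x=O_\pi(G)$: indeed $O_\pi(G)\subseteq H\cap H^x$ always, while $(H\cap H^x)O_\pi(G)/O_\pi(G)\subseteq\overline{H}\cap\overline{H}^{\,\overline{x}}=1$. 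Hence I may assume $O_\pi(G)=1$ and aim for $H\cap H^x=1$.

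Next I would exploit that a group of odd order is solvable, so $C_G(F)\subseteq F$ for $F=F(G)$. Because $O_\pi(G)=1$ the Fitting subgroup is a $\pi'$-group; thus $\gcd(|F|,|H|)=1$, $F\cap H=1$, and $H$ acts faithfully and coprimely on $F$ (as $C_H(F)\subseteq F\cap H=1$). Inside $G_0=F\rtimes H\leqslant G$, Lemma \ref{lemIntCen} gives $H\cap H^v=C_H(v)$ for each $v\in F$. So the whole theorem collapses to finding $v\in F$ with $C_H(v)=1$, that is, a regular orbit of $H$ on $F$; note this already shows one may take $x\in F(G)$.

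To make the regular-orbit question tractable I would linearize. Replacing $F$ by $V=F/\Phi(F)$ loses nothing, since any $h$ fixing a lift of $\bar{v}$ fixes $\bar{v}$, whence $C_H(v)\subseteq C_H(\bar{v})$; so it suffices to find a regular orbit on the faithful coprime module $V=\bigoplus_{q\in\pi'}V_q$, with $V_q$ an $\mathbb{F}_q$-space. The crucial observation, and the reason the Hall and nilpotency hypotheses make the synchronization over $p_1,\dots,p_n$ free of charge, is that for nilpotent $H$ one has $C_H(v)=\prod_{p\in\pi}C_{P_p}(v)$, the Sylow $p$-subgroup of $C_H(v)$ being $C_H(v)\cap P_p=C_{P_p}(v)$. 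Thus the single condition $C_H(v)=1$ is equivalent to $P_p\cap P_p^v=1$ for all $p$ at once, so a lone $v$ serves every $P_{p_i}$ simultaneously. (Equivalently, setting $K_q=C_H(V_q)$ one has $\bigcap_q K_q=1$, so regular orbits of the nilpotent quotients $H/K_q$ on the individual $V_q$ recombine into a regular orbit on $V$.)

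The heart of the matter, and the step I expect to be the main obstacle, is to establish that a nilpotent group of odd order acting faithfully and coprimely on an $\mathbb{F}_q$-module $V$ actually has a regular orbit. I would prove this through the estimate $\sum_{1\ne h\in H}|C_V(h)|<|V|$, which at once produces the required $v$. Oddness is what drives the inequality: a nontrivial element of prime order $r$ contributes a fixed space whose codimension satisfies $\dim[V,h]\geq\mathrm{ord}_r(q)=:d$, and since $q^{d}\equiv1\pmod r$ we get $|V:C_V(h)|\geq q^{d}\geq r+1$, comfortably beating the $r-1$ generators of $\langle h\rangle$ that share this fixed space; crucially, the absence of involutions removes the dangerous one-dimensional $(-1)$-eigenspace contributions that would wreck the count in even order. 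Pushing this bound through for an arbitrary nilpotent $H$, rather than a single cyclic factor, is the delicate part: one must control the overlaps of the various $C_V(h)$ and treat all elements of $H$ simultaneously, which is precisely the multi-prime difficulty that the single-prime counting behind It\^o's Theorem \ref{thIto} does not address.
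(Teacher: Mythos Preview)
The paper does not give its own proof of this statement; its proof reads in full ``This is \cite[Th.~B.4]{Bia75}. See also \cite{Dol05}.'' So there is no in-paper argument to compare against, and your proposal must stand on its own.

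Your reduction is correct and is indeed the standard route: passing to $G/O_\pi(G)$, noting that $F(G)$ is then a $\pi'$-group so that $H$ acts faithfully and coprimely on it, invoking Lemma~\ref{lemIntCen} inside $F\rtimes H$ to translate $H\cap H^v=1$ into $C_H(v)=1$, and descending to $V=F/\Phi(F)$. The remark that nilpotency of $H$ makes $C_H(v)=\prod_p C_{P_p}(v)$, so that a single regular vector synchronizes all primes, is also right. Up to here this is essentially how the cited sources proceed.

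The gap is the final step. The inequality you propose, $\sum_{1\ne h\in H}|C_V(h)|<|V|$, is \emph{false} in general under your hypotheses, so it cannot be the engine of the proof. Take $H=C_3\times C_3\times C_3$ acting coordinatewise on $V=\mathbb{F}_7\oplus\mathbb{F}_7\oplus\mathbb{F}_7$, each factor through a primitive cube root of unity; this is faithful, coprime, and everything is odd. A nontrivial $h$ with exactly $j$ trivial coordinates has $|C_V(h)|=7^{j}$, and counting gives
\[
\sum_{1\ne h\in H}|C_V(h)|=8\cdot 1+12\cdot 7+6\cdot 49=386>343=|V|.
\]
Of course a regular orbit still exists (take $v=(1,1,1)$), but the union bound does not see it, and no amount of ``controlling overlaps'' via inclusion--exclusion will repair this uniformly: the bad set genuinely has more points-with-multiplicity than $|V|$. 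The actual proofs in \cite{Bia75,Dol05} (and the regular-orbit literature such as \cite{Har81}) do not rely on this crude estimate; they use structural induction on $H$ and the module, exploiting, for instance, decompositions of $V$ under normal subgroups of $H$ and Hall--Higman type information. Your per-element bound $|V:C_V(h)|\ge q^{\mathrm{ord}_r(q)}\ge r+1$ is a useful ingredient in such arguments, but by itself it is too weak to sum over all of $H$.
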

\begin{proof}
This is \cite[Th. B.4]{Bia75}. See also \cite{Dol05}.
\end{proof} 

In general, even if $|G|$ is odd and $H$ is abelian but not a Hall subgroup,
it is false that $H \cap H^x = \Core_G(H)$ for some $x \in G$.
Indeed the number of conjugates of $H$ required to obtain the core can be arbitrarily large:
take $H_0$ abelian, $G= H_0 \wr_n C_n$,
and $H < H_0^n$ a core-free subgroup of $G$ isomorphic to $H_0^{n-1}$.
The intersection of any $b \geq 1$ conjugates of $H$ will contain a copy of $H_0^{n-b}$,
showing that $n$ conjugates are required.

There are more general results that provide $H \cap H^x \subseteq F(G)$,
 the most well-known being a theorem of Zenkov \cite{Zen94} on abelian subgroups.

\begin{theorem}[Zenkov] \label{thZenkovG}
Let $G$ be a finite group and let $A \leqslant G$ be abelian.
Then there exists $x \in G$ such that $A \cap A^x \subseteq F(G)$.
\end{theorem}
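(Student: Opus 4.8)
The plan is to induct on $|G|$. By Lemma~\ref{lemRedHyp} I may replace $G$ by $G/\mathcal{H}(G)$ without loss, since $\mathcal{H}(G)\subseteq F(G)$, and so assume $Z(G)=1$. If $A\subseteq F(G)$ there is nothing to prove (take $x=1$), so I choose $x\in G$ making $D:=A\cap A^x$ of minimal possible order and suppose, towards a contradiction, that $D\not\subseteq F(G)$; in particular $D\neq 1$. The point of the minimality is to trap $D$ inside a normal abelian, hence nilpotent, section of a proper subgroup to which induction applies.

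Because $A$ is abelian and $D\subseteq A$, we have $A\subseteq C_G(D)$, and from $D\subseteq A^x$ likewise $A^x\subseteq C_G(D)$; hence $D\subseteq Z\bigl(C_G(D)\bigr)$ and $A,A^x$ both lie in $C:=C_G(D)$. If $C=G$ then $D\subseteq Z(G)=1$, a contradiction, so $C<G$, and induction applied to the abelian subgroup $A\leqslant C$ gives $y\in C$ with $A\cap A^y\subseteq F(C)$ (and $D\subseteq A\cap A^y$, as $y$ centralizes $D$). The essential difficulty now stands exposed: I have forced the intersection inside $F(C)$, but the Fitting subgroup of a proper subgroup need not lie in $F(G)$, and bridging this gap is the heart of the matter.

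To build that bridge I would run the induction alongside the usual reduction to a primitive configuration, in which $F(G)=O_p(G)$ is a self-centralizing normal $p$-subgroup and $G=F(G)\rtimes B$. Here Lemma~\ref{lemIntCen} makes the claim transparent: passing to the Frattini quotient $V:=F(G)/\Phi(F(G))$ and writing $\bar A$ for the image of $A$, the intersection $A\cap A^v$ for $v\in F(G)$ is controlled by $C_{\bar A}(\bar v)$, and since conjugation by elements of $A$ does not alter it, it suffices to find $v$ with $C_{A_{p'}}(v)=1$ for the $p'$-part $A_{p'}$, i.e.\ a regular orbit for its faithful coprime action on $V$. Such an orbit always exists: decomposing $V=\bigoplus_i V_i$ into irreducibles, each cyclic quotient $A_{p'}/C_{A_{p'}}(V_i)$ acts fixed-point-freely on $V_i\setminus\{0\}$, so any $v$ whose every component is nonzero satisfies $C_{A_{p'}}(v)=\bigcap_i C_{A_{p'}}(V_i)=1$ (compare the regular-orbit results of \cite{Har81}). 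Thus the prime-to-$p$ part of the intersection is pushed into $F(G)$, which is the positive content of the model case.

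The hard part will therefore be the passage to this model while keeping $F(G)$ under control — in particular ensuring that the $p$-part $A_p$ also lands in $F(G)$ — and, above all, the non-solvable case, where $F(G)$ is only a part of the self-centralizing generalized Fitting subgroup $F^{\ast}(G)=F(G)E(G)$. There $A\cap A^x$ may meet the components in $E(G)$, and it must be trivialized on each nonabelian simple section; this is precisely where I would invoke property $(*)$ for finite simple groups (Theorem~\ref{thMZ}) and combine it with the coprime module analysis above to drive $A\cap A^x$ into $F(G)$.
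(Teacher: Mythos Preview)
First, note that the paper does not prove this theorem: its entire proof reads ``See \cite{Zen94}, or \cite[Th.~2.18]{Isa08}.'' So there is no in-paper argument to compare against, only the cited references.

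Your proposal is not a complete proof, and you say so yourself. The opening reduction modulo $\mathcal{H}(G)$ is fine, and choosing $D=A\cap A^{x}$ of minimal order and observing $D\leq Z(C_G(D))$ is a natural start. But the inductive step only yields $D\subseteq F(C_G(D))$, and you correctly flag that $F(C_G(D))$ need not lie in $F(G)$; this gap is never closed. What follows is a programme rather than an argument: a reduction to a primitive configuration that is not actually carried out, a regular-orbit treatment of $A_{p'}$ that leaves $A_{p}$ unaddressed (and whose appeal to Lemma~\ref{lemIntCen} is imprecise, since $A$ is not in general a complement of $F(G)$), and for nonsolvable $G$ an appeal to Theorem~\ref{thMZ}.

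That last step is the most serious problem. Theorem~\ref{thMZ} depends on the classification of finite simple groups, whereas Zenkov's theorem has a short, entirely elementary proof that predates \cite{MZ96} and uses no structure theory of simple groups whatsoever. Routing the argument through property~$(*)$ for simple groups is therefore both unnecessary and, given how such results are used elsewhere in the subject, uncomfortably close to circular. The actual proof in \cite[Th.~2.18]{Isa08} exploits the minimality of $M$ over all of $G$ more directly --- it does not descend to $C_G(M)$ or reduce to a primitive quotient --- and I would recommend reading it rather than trying to push the present outline further.
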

\begin{proof}
See \cite{Zen94}, or \cite[Th. 2.18]{Isa08}.
\end{proof}

In fact, Theorem \ref{thZenkovG} is the ``correct'' generalization of Brodkey's theorem,
rather than $H \cap H^x = \Core_G(H)$ as it might appear at first glance.
This observation played a crucial role in the formulation of Conjecture \ref{conj:A}.

The following is the equivalent of Vdovin's conjecture for groups of odd order:

\begin{conjecture} \label{conj:C}
Let $G$ be a group of odd order and let $H \leqslant G$ be nilpotent.
Then there exists $x \in G$ such that $H \cap H^x \subseteq F(G)$.
\end{conjecture}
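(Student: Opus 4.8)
The plan is to reduce Conjecture \ref{conj:C} to the Sylow formulation of the problem and then to rely on the synchronization asserted by Conjectures \ref{conj:A}--\ref{conj:B}. Since $H$ is nilpotent we write $H = H_{p_1} \times \cdots \times H_{p_n}$ with $H_{p_i}$ the Sylow $p_i$-subgroup of $H$, and, as recalled in Section \ref{sec1},
$$ H \cap H^x \> = \> (H_{p_1} \cap H_{p_1}^x) \times \cdots \times (H_{p_n} \cap H_{p_n}^x) , $$
so that $H \cap H^x \subseteq F(G)$ precisely when $H_{p_i} \cap H_{p_i}^x \subseteq O_{p_i}(G)$ for every $i$. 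The whole point is therefore to produce a single element $x$ that simultaneously drives each $p_i$-part into its core; the index $i$ must not enter the choice of $x$.

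First I would pass to Sylow subgroups. Fix a Sylow $p_i$-subgroup $P_i$ of $G$ with $H_{p_i} \leqslant P_i$; then $H_{p_i} \cap H_{p_i}^x \subseteq P_i \cap P_i^x$, so it is enough to find $x$ with $P_i \cap P_i^x \subseteq O_{p_i}(G)$ for all $i$ at once. Here the odd order enters through It\^o's Theorem \ref{thIto}: $G$ has $(*)$, and hence the inclusion-minimal member of $\{ P_i \cap P_i^g : g \in G \}$ is exactly $O_{p_i}(G)$, since this set always contains $O_{p_i}(G)$ and $(*)_{p_i}$ exhibits a conjugate attaining it. Consequently Conjecture \ref{conj:C} for odd order is implied by Conjecture \ref{conj:B}, equivalently by Conjecture \ref{conj:A}, applied to the primes dividing $|H|$. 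Note that, because we need only containment in the core rather than the exact minimal value, there is in principle a little more room than in Conjecture \ref{conj:B}.

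The remaining and genuinely difficult point is the synchronization: one conjugator valid for all primes, not one per prime. This is exactly the obstruction isolated by the main conjectures, and a counting argument of the kind used for Theorem \ref{thSym} is unavailable here (see Remark \ref{remSmallGS}). To attack it directly I would imitate the inductive scheme behind Theorem \ref{thMetaNilp}. Working modulo $\mathcal{H}(G)$ via Lemma \ref{lemRedHyp}, choose a minimal normal subgroup $N$, which is an elementary abelian $p$-group because $G$ is solvable, and apply the inductive hypothesis in $G/N$ to obtain a conjugator $\overline{x}$ correcting the $p_i$-parts of the image of $H$. The main obstacle is to lift $\overline{x}$ to an element $x \in G$ that still controls every prime at once; the tool for this is the remark following Theorem \ref{thMetaNilp}, that $P = Q^a$ and $Q \cap Q^b = 1$ with $a,b$ commuting force $P \cap P^b = 1$. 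Two difficulties arise: unlike the metanilpotent case one cannot hope to keep the conjugator inside $F(G)$, so the commuting relations this tool requires must be engineered by a more delicate choice of $x$ spread across the various Sylow normalizers; and, as already happens for Theorem \ref{thMetaNilp}, the statement of Conjecture \ref{conj:C} is probably too weak to carry an induction, so one must first guess the correct strengthened invariant.
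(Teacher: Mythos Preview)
Your proposal is not a proof, and neither does the paper offer one: Conjecture~\ref{conj:C} is left open, with only the one-line observation ``Conjecture~\ref{conj:A} $\Rightarrow$ Conjecture~\ref{conj:B} $\Rightarrow$ Conjecture~\ref{conj:C}'' and the partial results Lemmas~\ref{lemInj}--\ref{lemImprov} and Theorem~\ref{thMetaNilp}. Your first two paragraphs correctly rediscover precisely this implication: embedding each $H_{p_i}$ in a Sylow $P_i$, using It\^o's Theorem~\ref{thIto} to identify the inclusion-minimal intersection with $O_{p_i}(G)$, and concluding that a synchronizing $x$ from Conjecture~\ref{conj:B} would do the job. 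This is sound, and it is exactly what the paper asserts, but it is conditional on an open conjecture and therefore not a proof of Conjecture~\ref{conj:C}.

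Your third paragraph is an honest inventory of obstructions rather than an argument. The difficulties you list are real and are the reasons the paper stops at the metanilpotent case: beyond Fitting length~$2$ one cannot confine the conjugator to $F(G)$ (so the commuting trick $P=Q^a$, $[a,b]=1$ has no obvious purchase), and the bare statement of Conjecture~\ref{conj:C} does not seem to support induction. There is no mathematical error in what you write, but you should be clear that you have reproduced the paper's reduction and diagnosed the remaining problem, not solved it.
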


Observe that Conjecture \ref{conj:A} $\Rightarrow$ Conjecture \ref{conj:B} $\Rightarrow$ Conjecture \ref{conj:C}.
We conclude this section with some partial results towards Conjecture \ref{conj:C}, starting with a theorem of Mann.
The proof at the end of \cite{Man75} contains some typos and is difficult to read,
so we rewrite it here for completeness.

\begin{lemma}[Mann] \label{lemInj}
Let $G$ be a group of odd order and let $H \leqslant G$ be nilpotent.
If $F(G) \subseteq H$, then there exists $x \in G$ such that $H \cap H^x =F(G)$.
\end{lemma}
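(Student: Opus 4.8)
The plan is to reduce the statement to a simultaneous, prime-by-prime assertion and then to produce a single conjugating element. First I would pass to $\overline{G}=G/\mathcal{H}(G)$: since $\mathcal{H}(G)\subseteq F(G)\subseteq H$ and $\mathcal{H}(G)\trianglelefteq G$, we have $\mathcal{H}(G)\subseteq H\cap H^x$ for every $x$, so $(H\cap H^x)/\mathcal{H}(G)=\overline{H}\cap\overline{H}^{\,\overline{x}}$; together with Lemma \ref{lemRedHyp} (which gives $F(\overline{G})=F(G)/\mathcal{H}(G)$) this lets me assume $\mathcal{H}(G)=1$. As $|G|$ is odd, $G$ is solvable, so $F:=F(G)$ is self-centralizing: $C_G(F)=Z(F)\subseteq F$. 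Writing $H=\prod_p H_p$ and $F=\prod_p O_p(G)$ with $O_p(G)\subseteq H_p$, the identity $H\cap H^x=\prod_p(H_p\cap H_p^x)$ recorded in Section \ref{sec1} shows that the conclusion $H\cap H^x=F$ is equivalent to the \emph{simultaneous} system
\[ H_p\cap H_p^x = O_p(G)\qquad\text{for every prime }p. \]

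Next I would record the decoupling provided by nilpotency of $H$. For $q\ne p$ the factor $H_q$ centralizes $H_p$, so $H_p$ centralizes $O_{p'}(F)=\prod_{q\ne p}O_q(G)$. In particular, if $O_p(G)=1$ then $H_p\subseteq C_G(F)=Z(F)$, a $p'$-group, forcing $H_p=1$; hence $\pi(H)=\pi(F)$ and, more usefully, each $H_p$ lies in $C_G(O_{p'}(F))$. This centralizing relation is exactly the feature destroyed by factoring out $F$: modulo $F$ the hypothesis $F\subseteq H$ evaporates, and the statement one would be left with (that a core-free nilpotent subgroup meets some single conjugate trivially) is false in general, as the wreath-product example following Theorem \ref{thBialoHall} shows. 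So the proof must keep $F$ in play.

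For a single prime the system is solved by It\^o's theorem. Fix a Sylow $p$-subgroup $P\supseteq H_p$. By Theorem \ref{thIto} the group has $(*)_p$, so after Sylow conjugation there is $g_p\in G$ with $P\cap P^{g_p}=O_p(G)$; since $O_p(G)\trianglelefteq G$ we get $O_p(G)\subseteq H_p\cap H_p^{g_p}\subseteq P\cap P^{g_p}=O_p(G)$, i.e. $H_p\cap H_p^{g_p}=O_p(G)$. Thus the only genuine issue is \emph{synchronization}: replacing the prime-dependent $g_p$ by one element $x$ that works for all $p$ at once.

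To synchronize I would argue by induction on $|G|$, peeling off a minimal normal subgroup $N\leq\Phi(G)$ when $\Phi(G)\ne1$ (so that $F(G/N)=F(G)/N$ and a solution lifts verbatim by the correspondence above), and treating the complemented case $\Phi(G)=1$, where $F$ is abelian and self-centralizing and $G=F\rtimes K$, by writing $H=F\rtimes(H\cap K)$ and exploiting Lemma \ref{lemIntCen}. The device for combining the $g_p$ is the commuting fact highlighted after Theorem \ref{thMetaNilp}: if $H_p=Q^{a}$ and $Q\cap Q^{b}=O_p(G)$ with $a,b$ commuting, then $H_p\cap H_p^{b}=O_p(G)$; because $H_p$ centralizes $O_{p'}(F)$, conjugating elements supported on the complementary Fitting factors leave the $p$-part undisturbed, which is what is needed to assemble the separate witnesses into one $x$. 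The hard part will be precisely this assembly — controlling the mutual interaction of the primes so that a single element realizes all of the It\^o intersections simultaneously — and it is here that the self-centralizing property $C_G(F)=F$ and the relations $H_p\subseteq C_G(O_{p'}(F))$ must be exploited to their full strength.
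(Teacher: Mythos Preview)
Your proposal correctly isolates the key structural fact --- that each $H_p$ lies in $C_p := C_G(O_{p'}(F))$ --- and correctly identifies It\^o's theorem as the single-prime input. But the argument stops exactly where the content begins: you yourself describe the assembly step as ``the hard part'' and offer only a heuristic for it. That heuristic does not obviously close. Peeling off $\Phi(G)$ merely reduces to the case where $F$ is abelian; there the It\^o witness for the prime $p$ is an element of $C_p$, not of $O_p(G)$, and for distinct primes the subgroups $C_p$ have no reason to commute elementwise, so the commuting-elements device you invoke from Section~\ref{sec4} cannot be applied as stated. Producing a single $x$ with $P_i\cap P_i^x=O_{p_i}(G)$ simultaneously is precisely Conjecture~\ref{conj:B}, open even for odd-order groups, so the extra hypothesis $F(G)\subseteq H$ must be exploited more sharply than just to deduce $H_p\subseteq C_p$.

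The paper's proof sidesteps synchronization altogether. Enlarge $H$ to a maximal nilpotent subgroup containing $F(G)$ (harmless, since $F(G)\subseteq H\cap H^x$ for every $x$). By a theorem of Mann \cite{Man71}, such a subgroup is a \emph{nilpotent injector} of $G$; all nilpotent injectors are conjugate, and each has the form $\prod_p S_p$ with $S_p\in\Syl_p(C_p)$ and $O_p(C_p)=O_p(G)$. Now apply It\^o inside each $C_p$ \emph{independently} to pick $R_p\in\Syl_p(C_p)$ with $S_p\cap R_p=O_p(G)$; then $R=\prod_p R_p$ is another injector and $S\cap R=F(G)$. Conjugacy of injectors supplies the element $x$ \emph{a posteriori}, with no need to construct it prime by prime. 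The idea you are missing is to trade the search for a single conjugating element for the construction of two nilpotent subgroups lying in a known conjugacy class, and let the conjugacy theorem for injectors do the synchronizing for you.
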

\begin{proof} 
We can assume that $H$ is a maximal nilpotent subgroup containing $F(G)$.
Then, by \cite[Th. 1]{Man71}, $H$ is a nilpotent injector of $G$ (see \cite{Man71} for the definition of injector).
It is well known that any two injectors are conjugate, so it is sufficient to provide two injectors intersecting in $F(G)$.
For any prime $p$, let $C_p = C_G(O_{p'}(F(G)))$, and let $S_p \in \Syl_p(C_p)$.
We have $O_p(C_p) = O_p(G)$,
and it is proved in \cite{Man71} that $S= \prod_p S_p$ is a nilpotent injector of $G$.
For each $p$, by It\^o's theorem there exists $R_p \in \Syl_p(C_p)$ such that $S_p \cap R_p = O_p(G)$.
It follows that $R= \prod_p R_p$ is an injector of $G$ and $S \cap R=F(G)$.
\end{proof}

\begin{lemma} \label{lemImprov}
Let $G$ be a group of odd order and let $H < G$ be nilpotent.
\begin{itemize}
\item[(i)] If $H$ is maximal, then there exists $x \in G$ such that $H \cap H^x = \Core_G(H)$;
\item[(ii)] if $(|F(G)|,|H|) = 1$, then there exists $x \in G$ such that $H \cap H^x =1$.
\end{itemize}
\end{lemma}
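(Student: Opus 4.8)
The plan is to treat both parts with a single engine: reduce, via the semidirect-product computation of Lemma \ref{lemIntCen}, to the problem of finding one element whose centralizer in $H$ equals the desired subgroup, and then to produce such an element by a regular-orbit argument for coprime actions of odd order. Throughout I reduce modulo the hypercenter, so I may assume $Z(G)=1$; in particular $C_G(F(G))\subseteq F(G)$.

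For (ii) I would argue by induction on $|G|$, aiming to reduce to the case $G=F(G)\rtimes H$. Set $F=F(G)$; since $(|H|,|F|)=1$ we have $H\cap F=1$. A short computation gives $F(FH)=F$: the Hall $\pi(H)$-part of $F(FH)$ is characteristic, hence normal in $FH$, and it centralizes $F$, so it lies in $C_{FH}(F)\subseteq F$ and is therefore trivial. Thus, if $FH<G$, the inductive hypothesis applied to $FH$ produces $x\in FH$ with $H\cap H^x=1$; and if $FH=G$ we are in the base case $G=F\rtimes H$. There $C_H(F)=H\cap C_G(F)\subseteq H\cap F=1$, so $H$ acts faithfully and coprimely on $F$, and Lemma \ref{lemIntCen} gives $H\cap H^v=C_H(v)$ for every $v\in F$. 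It therefore suffices to find $v\in F$ with $C_H(v)=1$, i.e. a regular orbit. I would do this one prime at a time: for each $q\mid|F|$ put $K_q=C_H(O_q(F))$, pass to the Frattini quotient of $O_q(F)$ (on which $H/K_q$ still acts faithfully, by coprimeness), and use that the faithful coprime nilpotent group $H/K_q$ has a regular vector there. Lifting gives $v_q\in O_q(F)$ with $C_H(v_q)=K_q$, and then $v=\prod_q v_q$ satisfies $C_H(v)=\bigcap_q K_q=C_H(F)=1$.

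For (i) I would reduce modulo $\Core_G(H)$ and aim for the analogous statement $H\cap H^x=1$, now using maximality to supply the missing semidirect structure. The strategy is that for a core-free maximal nilpotent $H$ one expects a normal subgroup $V$ with $G=V\rtimes H$ and $V\cap H=\Core_G(H)$; this is precisely what occurs in the typical example $\mathbb{F}_7^2\rtimes C_{21}$, where the central $C_3$ acts fixed-point-freely and forces $H$ to be complemented by $\mathbb{F}_7^2$, even though $H$ is not a Hall subgroup. Granting such a $V$, maximality should force $H$ to act faithfully on $V$, and Lemma \ref{lemIntCen} together with the regular-orbit argument above produces $v\in V\subseteq F(G)$ with $H\cap H^v=C_H(v)=1$; undoing the reduction yields $H\cap H^x=\Core_G(H)$. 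The fixed-point-free behaviour of the Hall factor of $H$ whose primes miss $|V|$ is exactly what allows a single $v$ to collapse all of $H\cap H^v$ at once, so the mechanism is genuinely the same as in (ii), and one again lands on $x\in F(G)$, matching the conclusion of Theorem \ref{thMetaNilp}.

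The hard part is twofold. For (ii) the crux is the regular-orbit input: I must know that a faithful coprime action of an odd-order nilpotent group on a nilpotent group has an element with trivial centralizer, and I must arrange the choices across the different Sylow subgroups of $F$ so that the per-prime centralizers $K_q$ intersect trivially; this is where oddness and coprimality are indispensable. For (i) the main obstacle is maximality itself: it is not inherited by quotients, since $\Core_G(H)$ need not be hypercentral and a proper nilpotent overgroup can appear modulo $\Core_G(H)$, and moreover $G$ need not split as $F(G)\rtimes H$ at all. Thus the real work is to extract, from maximality alone, a normal section on which to run the regular-orbit argument, and to guarantee that the single element $x$ works simultaneously for the primes dividing $|F(G)|$—where the action is not coprime and the target is $\Core_G(H)$ rather than $1$—and for the primes not dividing $|F(G)|$. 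This simultaneity is exactly the synchronization phenomenon that the paper isolates.
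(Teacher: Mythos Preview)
Your treatment of (ii) follows the paper's reduction verbatim: show $F(FH)=F$ and induct down to $G=F\rtimes H$. At that point the paper simply observes that $H$ is a Hall subgroup and invokes Theorem~\ref{thBialoHall}, whereas you rebuild the conclusion from a per-prime regular-orbit claim. Your route works (the regular-orbit input is true for odd-order nilpotent groups in faithful coprime action), but it is reproving a special case of the theorem already on the table.

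For (i) there is a genuine gap: you never construct the complement, and the obstacles you list are illusory. Maximality \emph{does} descend to $G/\Core_G(H)$ by the correspondence theorem, since $\Core_G(H)\subseteq H$; so assume $\Core_G(H)=1$ and pick a \emph{minimal} normal subgroup $V$ of $G$. Core-freeness gives $V\not\subseteq H$, so maximality forces $G=VH$, and $V\cap H\unlhd G$ lies in $H$, hence $V\cap H=1$. Thus $G=V\rtimes H$ with $C_H(V)=1$. Now $V$ is an irreducible $\mathbb{F}_pH$-module; if $O_p(H)\neq 1$ its fixed-point space is a nonzero $H$-submodule, hence all of $V$, contradicting faithfulness. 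So $p\nmid|H|$, $H$ is a Hall subgroup of $G$, and Theorem~\ref{thBialoHall} finishes at once. There is no non-coprime action, no failure to split, and no synchronization issue here: the minimal normal subgroup does all the work. This is exactly the paper's argument, which your proposal talks around but does not find.
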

\begin{proof}
(i) We can assume $\Core_G(H)=1$.
Let $V \unlhd G$ be a minimal normal subgroup, so that $V$ is an elementary abelian $p$-group for some prime $p$.
It is easy to see that $G=V \rtimes H$ and $C_H(V)=1$.
Then $H$ is a Hall subgroup of $G$ and the proof follows from Theorem \ref{thBialoHall}.

(ii) Let $G$ be a minimal counterexample.
Let $N=F(G)$, and observe that $N \subseteq F(NH)$.
If $N \neq F(NH)$, then there exists a prime $p$ not dividing $|N|$ such that $O_p(NH) \neq 1$.
On the other hand we have
$$ O_p(NH) = O_p(F(NH)) \subseteq C_G(N) \subseteq N , $$
and this is clearly impossible.
So $N=F(NH)$. As a consequence, if $NH \neq G$, then $NH$ gives a smaller counterexample, contradicting the minimality of $G$.
Otherwise $H$ is a Hall subgroup of $G$, and again we are done by Theorem \ref{thBialoHall}.
\end{proof}

\vspace{0.1cm}
\section{Theorems \ref{thTwoPrimes} and \ref{thUnion}} \label{sec3}

The following fact is well known.

\begin{lemma} \label{lemCosets}
Let $H,K \leqslant G$. The following hold:
\begin{itemize}
    \item[(i)] $|HK|/|H|$ is the number of right cosets of $H$ with non-empty intersection with $K$;
    \item[(ii)] if $H \cap Kg \neq \emptyset$ for some $g \in G$,
    then $H \cap Kg$ is a right coset of $H \cap K$. 
\end{itemize}
\end{lemma}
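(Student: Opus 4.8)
The plan is to treat the two parts separately, both by direct manipulation of cosets; the only thing requiring care is to keep left and right cosets straight and to use repeatedly the fact that a right coset $Hg$ is determined by any one of its elements.

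For part (i), I would first observe that a right coset $Hg$ meets $K$ exactly when it contains some $k \in K$, and in that case $Hg = Hk$. Thus the right cosets of $H$ that meet $K$ are precisely those of the form $Hk$ with $k \in K$, and these are exactly the distinct cosets occurring in the union $HK = \bigcup_{k \in K} Hk$. Since distinct right cosets of $H$ are disjoint and each has cardinality $|H|$, this union is a disjoint union of $m$ cosets, where $m$ is the number of right cosets meeting $K$. Hence $|HK| = m\,|H|$, and dividing gives $|HK|/|H| = m$, as claimed.

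For part (ii), assuming $H \cap Kg \neq \emptyset$, I would fix an element $x$ of this intersection. Since $x \in Kg$ we have $Kg = Kx$, and since $x \in H$ we also have $x^{-1} \in H$. I then claim $H \cap Kx = (H \cap K)x$ and prove it by double inclusion: if $y \in H \cap Kx$ then $yx^{-1} \in H$ (as $y, x \in H$) and $yx^{-1} \in K$ (as $y \in Kx$), so $y \in (H \cap K)x$; conversely any $y = mx$ with $m \in H \cap K$ lies in both $H$ and $Kx = Kg$. Hence $H \cap Kg = (H \cap K)x$ is a right coset of $H \cap K$.

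I do not expect any genuine obstacle here: the statement is elementary, and both parts reduce to the observation that a coset is pinned down by a single representative. The only pitfall to avoid is mixing up the left/right conventions, since $HK$ is naturally a union of \emph{right} cosets of $H$, while $Kg$ is itself a right coset of $K$.
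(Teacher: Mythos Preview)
Your argument is correct and follows exactly the same approach as the paper's proof, only with more detail: the paper dismisses (i) as clear and for (ii) picks $h \in H \cap Kg$, rewrites $Kg = Kh$, and asserts $H \cap Kh = (H\cap K)h$, which is precisely your double-inclusion spelled out.
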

\begin{proof}
Part (i) is clear. For (ii), suppose $h \in H \cap Kg$.
Then
$$ H \cap Kg = H \cap Kh = (H \cap K)h . \qedhere $$
\end{proof}

\begin{lemma} \label{lemSyn}
Let $p_1,p_2$ be distinct primes and suppose that $G=N_G(P_1) N_G(P_2)$ where $P_i \in \Syl_{p_i}(G)$.
Then for all $Q_i \in \Syl_{p_i}(G)$ ($i=1,2$)
there are at least $|N_G(P_1) \cap N_G(P_2)|$ elements $x \in G$ such that $P_1^x = Q_1$ and $P_2^x= Q_2$.
\end{lemma}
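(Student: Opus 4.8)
The plan is to describe the set of elements $x$ achieving both conjugations explicitly as the intersection of two cosets, and then to identify that intersection as a right coset of $N_G(P_1) \cap N_G(P_2)$. Write $N_i = N_G(P_i)$. By Sylow's theorem fix $g_1, g_2 \in G$ with $P_1^{g_1} = Q_1$ and $P_2^{g_2} = Q_2$. For $x \in G$ one has $P_1^x = Q_1$ if and only if $x g_1^{-1} \in N_1$, i.e. $x \in N_1 g_1$, and likewise $P_2^x = Q_2$ if and only if $x \in N_2 g_2$. Hence the elements to be counted are precisely those of $N_1 g_1 \cap N_2 g_2$, and it suffices to prove
$$ |N_1 g_1 \cap N_2 g_2| = |N_1 \cap N_2| . $$

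First I would reduce to the intersection of a subgroup with a single coset. Right-multiplication by $g_1^{-1}$ is a bijection of $G$, so $|N_1 g_1 \cap N_2 g_2| = |N_1 \cap N_2 g|$ where $g = g_2 g_1^{-1}$. Once we know $N_1 \cap N_2 g \neq \emptyset$, Lemma \ref{lemCosets}(ii) (applied with $H = N_1$ and $K = N_2$) tells us that $N_1 \cap N_2 g$ is a right coset of $N_1 \cap N_2$, and therefore has size exactly $|N_1 \cap N_2|$; transporting back along the bijection yields the claimed equality. Note this gives equality, so the ``at least'' in the statement is automatic.

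The one substantive point is the non-emptiness of $N_1 \cap N_2 g$, and this is where the hypothesis $G = N_1 N_2$ enters. Since $N_1 N_2 = G$ is a subgroup, it coincides with the set of inverses of its elements, namely $N_2 N_1$, so $G = N_2 N_1$ as well. In particular $g = g_2 g_1^{-1} \in G = N_2 N_1$, so we may write $g = n_2^{-1} n_1$ with $n_1 \in N_1$ and $n_2 \in N_2$; then $n_1 = n_2 g$ lies in $N_1 \cap N_2 g$, which is thus non-empty. The main (and essentially only) obstacle is recognizing that $G = N_1 N_2$ forces $G = N_2 N_1$, so that $g$ lands in $N_2 N_1$; everything else is routine coset bookkeeping already packaged in Lemma \ref{lemCosets}.
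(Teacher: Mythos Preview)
Your proof is correct and follows essentially the same approach as the paper: identify the good elements as $N_1 g_1 \cap N_2 g_2$, translate by $g_1^{-1}$ to $N_1 \cap N_2 g$, use $G = N_1 N_2 = N_2 N_1$ to see this intersection is non-empty, and then apply Lemma~\ref{lemCosets}(ii) to conclude it is a coset of $N_1 \cap N_2$. The only cosmetic difference is that the paper invokes Lemma~\ref{lemCosets}(i) (with $G = JI$) for the non-emptiness step, whereas you write out the factorization $g = n_2^{-1} n_1$ directly.
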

\begin{proof}
Let $I=N_G(P_1)$, $J=N_G(P_2)$,
and $P_1^y=Q_1$, $P_2^z = Q_2$ for some $y,z \in G$.
Note that $(P_1)^{gy}=Q_1$ for every $g \in I$, and $(P_2)^{gz}=Q_2$ for every $g \in J$.
Then, all the elements of $Iy$ provide the desired conjugate of $P_1$,
and all the elements of $Jz$ provide the desired conjugate of $P_2$.
Now $|Iy \cap Jz| = |I \cap Jzy^{-1}|$.
By Lemma \ref{lemCosets} and $G=JI$,
we have that $I$ intersects every right coset of $J$, so this cardinality is nonzero.
In particular $Iy \cap Jz$ is non-empty and has cardinality $|I \cap J|$.
\end{proof}

\begin{proof}[Proof of Theorem \ref{thTwoPrimes}]
Immediate from Lemma \ref{lemSyn} (note that $G=P_1P_2$).
\end{proof}

We move to Theorem \ref{thUnion}.
A cover of proper subgroups $G = \bigcup_{i=1}^n H_i$ is {\itshape irredundant}
if removing any of the $H_i$'s does not give a cover anymore.
The next pretty result is \cite[Lem. 2.2]{BFS95}.

\begin{lemma}[Bryce, Fedri \& Serena] \label{lemBFS}
Let $G = \bigcup_{i=1}^n H_i$ be an irredundant cover.
If $p$ is a prime with $p \geq n$, then $\bigcap_{i=1}^n H_i$ contains all the $p$-elements of $G$.
\end{lemma}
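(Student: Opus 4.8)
The plan is to prove the statement pointwise in the index: I will show that a fixed $p$-element $x$ lies in each $H_i$ separately, which gives $x \in \bigcap_{i=1}^n H_i$. So fix one subgroup, say $H_1$, and suppose for contradiction that $x \notin H_1$; note this already forces $x \neq 1$, since $1 \in H_1$. The very first thing I would do is invoke irredundancy: choose an element $a \in H_1$ that lies in no other $H_j$. This is the only point where the irredundancy hypothesis enters, and it is genuinely indispensable.

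Next I would examine the $p$ elements $a, ax, ax^2, \ldots, ax^{p-1}$. These are pairwise distinct, because $x$ is a nontrivial $p$-element and hence has order at least $p$. The whole argument then rests on bounding, for each fixed $i$, how many of these elements can lie in $H_i$. I would split the indices into $A = \{i : x \in H_i\}$ and its complement $B$, and write $C = \langle x\rangle$. Since $C \subseteq G = \bigcup_{i} H_i$, we have $C = \bigcup_i (C \cap H_i)$; as a finite cyclic group is never the union of its proper subgroups, some $C \cap H_i$ must equal $C$, so $A \neq \emptyset$. By assumption $1 \in B$.

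The two key estimates are as follows. For $i \in A$ we have $x^j \in H_i$ for every $j$, so $a x^j \in H_i$ would force $a \in H_i$, which is impossible since $a$ is unique to $H_1$ and $i \neq 1$; thus no listed element lies in such an $H_i$. For $i \in B$ the subgroup $C \cap H_i$ is proper in the cyclic $p$-group $C$, hence contained in its unique maximal subgroup $\langle x^p\rangle$; so if $a x^{j}$ and $a x^{j'}$ both lay in $H_i$, then $x^{j-j'} \in C \cap H_i$ would give $p \mid (j - j')$, impossible for distinct $j, j' \in \{0, \ldots, p-1\}$. Hence each $H_i$ with $i \in B$ contains at most one of the $p$ elements, and each $H_i$ with $i \in A$ contains none.

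Since the cover puts each of the $p$ distinct elements into some $H_i$ — necessarily with $i \in B$ — assigning to each $j$ such an index defines an injection from $\{0, \ldots, p-1\}$ into $B$, whence $p \leq |B| \leq n - 1$, contradicting $p \geq n$. The hard part will be the two counting estimates of the previous paragraph: in particular, recognizing that the unique-maximal-subgroup structure of the cyclic $p$-group $C$ is precisely what converts the numerical hypothesis $p \geq n$ into the conclusion. The pointwise reduction and the choice of $a$ are then routine.
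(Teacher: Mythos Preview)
Your proof is correct. The paper does not supply its own argument for this lemma; it simply quotes it as \cite[Lem.~2.2]{BFS95}, so there is nothing in the paper to compare against. What you have written is essentially the original Bryce--Fedri--Serena argument: pick $a\in H_1\setminus\bigcup_{j\neq 1}H_j$ by irredundancy, look at the $p$ translates $a,ax,\ldots,ax^{p-1}$, and count how many can fall into each $H_i$ using that the cyclic $p$-group $\langle x\rangle$ has a unique maximal subgroup. One small remark: your justification that $A\neq\emptyset$ via ``a cyclic group is not a union of proper subgroups'' is more than you need --- since the $H_i$ cover $G$, the element $x$ itself already lies in some $H_i$, which immediately gives $i\in A$.
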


\begin{proof}[Proof of Theorem \ref{thUnion}]
Let $G = \bigcup_{i=1}^n N_G(P_i)$ be a minimal counterexample with respect to $|G|$ and $n$.
Then $n \geq 2$ and the cover is irredundant by the minimality of $n$.
(Note that in general not all the primes corresponding to non-normal Sylow subgroups would appear in this union.)
If $p_1 < \ldots <p_n$ are the primes appearing, then it is clear that $p_n>n$.
By Lemma \ref{lemBFS}, there exists a prime $q$ dividing $|G|$
such that every $q$-element of $G$ is contained in $N_G(P_i)$ for all $i$.

Let $K \unlhd G$ be the subgroup generated by the $q$-elements and set $\overline{G} = G/K$.
It is clear that $\overline{P_i} = KP_i/K \in \Syl_{p_i}(\overline{G})$.
As a consequence of the correspondence theorem, it is easy to see that
$$ N_{\overline{G}}(\overline{P_i}) = \frac{N_G(KP_i)}{K} . $$
Moreover $N_G(P_i) \subseteq N_G(KP_i)$.
If $KP_i \unlhd G$ for some $i$, then
by the Frattini argument we would have $(K P_i) N_G(P_i) = G$.
Since $K \subseteq N_G(P_i)$, the left side is $N_G(P_i)$, so this is impossible because $P_i$ is non-normal.
It follows that $N_{\overline{G}}(\overline{P_i}) \neq \overline{G}$ for each $i$,
so that $\overline{G} = \bigcup_{i=1}^n N_{\overline{G}}(\overline{P_i})$ provides a smaller counterexample,
against the minimality of $G$.
\end{proof}

\vspace{0.1cm}
\section{Two synchronization theorems} \label{sec4} 

In this section we prove Conjecture \ref{conj:A} in some cases
where Theorems \ref{thTwoPrimes} and \ref{thUnion} are not helpful.
For a finite group $G$ and $P \in \Syl_p(G)$, let
$$ \Gamma_G(P) \> = \> \{ x \in G : P \cap P^x \mbox{ is inclusion-minimal in } \{ P \cap P^g : g \in G \} \} , $$
i.e. the set of ``good elements'' for $P$.
Note that $\Gamma_G(P)$ is a union of right cosets of $N_G(P)$.
When $(P_1,\ldots,P_r) \in \DS(G)$, let
$$ \Gamma_G(P_1,\ldots,P_r) \> = \> \bigcap_{i=1}^r \Gamma_G(P_i) . $$
Conjecture \ref{conj:A} asserts that $\Gamma_G(P_1,\ldots,P_r)$ is non-empty for all finite groups $G$
and any choice of $(P_1,\ldots,P_r)$.
Observe that $|\Gamma_G(P)|$ does not depend on the specific choice of $P \in \Syl_p(G)$,
whereas $|\Gamma_G(P_1,\ldots,P_r)|$ does depend on $(P_1,\ldots,P_r)$ in general.

\subsection{Symmetric and alternating groups} 

Let $G=\Sym(n)$ or $G=\Alt(n)$.
Observe that $O_p(G) =1$ for all $p$ when $n \geq 5$,
and that $G$ has $(*)$ when $n \geq 9$ \cite{MZ96}.
Let $n \geq 9$ and $(P_1,\ldots,P_r) \in \DS(G)$.
By the union bound we have
$$ Prob_{g \in G} ( P_i \cap P_i^g \neq 1 \mbox{ for some } i  ) 
\> \leq \>
\sum_{i=1}^r Prob_{g \in G} (P_i \cap P_i^g \neq 1) . $$
Therefore, to prove that $\Gamma_G(P_1,\ldots,P_r)$ is non-empty it is sufficient to show that
\begin{equation} \label{eqSym}
\sum_{i=1}^r Prob_{g \in G}(P_i \cap P_i^g \neq 1) \> < \> 1 .
\end{equation}
\begin{comment}
$$ |G \setminus \Gamma_G(P_1,\ldots,P_r)| \leq \sum_{i=1}^r |G \setminus \Gamma_G(P_i)| , $$
so normalizing by $|G|$ we obtain
$$ 1- \frac{|\Gamma_G(P_1,\ldots,P_r)|}{|G|} \leq \sum_{i=1}^r  \left( 1 - \frac{|\Gamma_G(P_i)|}{|G|} \right) . $$
For each $i$ we have
$$ \frac{|\Gamma_G(P_i)|}{|G|} = Prob_{g \in G}(P_i \cap P_i^g =1) , $$
therefore to prove that $\Gamma_G(P_1,\ldots,P_r)$ is non-empty it is sufficient to show that
\begin{equation} \label{eqSym}
\sum_{i=1}^r Prob_{g \in G}(P_i \cap P_i^g \neq 1) \> < \> 1 .
\end{equation}
\end{comment}
We will prove (\ref{eqSym}) for all large $n$,
using that $\Gamma_G(P_i)$ is very big for odd primes, and quite big when $p=2$.
This is a refinement of \cite[Th. 1.1]{DGG+25}.

\begin{proposition} \label{propSym}
If $G=\Sym(n)$ or $G=\Alt(n)$ and $P \in \Syl_p(G)$, then
\begin{itemize}
\item[(i)] If $p \neq 2$, then $Prob_{g \in G}(P \cap P^g \neq 1) = O \left( \frac{1}{n} \right)$;
\item[(ii)] If $p=2$, then $Prob_{g \in G}(P \cap P^g \neq 1) \leq 0.99$ for all large $n$.
\end{itemize}
\end{proposition}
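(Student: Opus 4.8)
The plan is to bound each probability by a first-moment sum over conjugacy classes and then isolate the dominant contribution. For any $\tau\in G$, the set of $g\in G$ with $g\tau g^{-1}\in P$ is a union of $|P\cap\tau^G|$ cosets of $C_G(\tau)$, hence has size $|C_G(\tau)|\,|P\cap\tau^G|$. Counting pairs $(g,\tau)$ with $1\neq\tau\in P\cap P^g$ in two ways and dividing by $|G|$ gives
$$
\mathbb{E}_{g\in G}\bigl[\,|(P\cap P^g)\setminus\{1\}|\,\bigr]
\;=\;\sum_{\tau\in P\setminus\{1\}}\frac{|P\cap\tau^G|}{|\tau^G|}
\;=\;\sum_{C}\frac{|P\cap C|^2}{|C|},
$$
the last sum running over the nontrivial conjugacy classes $C$ of $G$. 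By Markov's inequality $Prob_{g\in G}(P\cap P^g\neq 1)$ is at most this sum, and the classes $C$ meeting $P$ are exactly the cycle types whose parts are powers of $p$. So everything reduces to estimating $|P\cap C|$ and $|C|$ for such cycle types.

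For odd $p$ the dominant term is the class of a single $p$-cycle: here $|C|=\binom{n}{p}(p-1)!\sim n^p/p$, while $P$ contains $\tfrac{p-1}{p}n+O(1)$ single $p$-cycles (one factor $C_p$ for each bottom block of the base group of the iterated wreath product), so this term is of order $n^{2-p}$, worst case $p=3$ giving $O(1/n)$. For the class of a product of $k$ disjoint $p$-cycles one finds $|P\cap C|\sim\bigl((p-1)n/p\bigr)^k/k!$ and $|C|\sim n^{pk}/(p^k k!)$, giving a term of order $n^{(2-p)k}$; summing the resulting series over all cycle types, together with the even smaller contributions of cycles of length $p^2,p^3,\ldots$, stays $O(1/n)$. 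This proves (i).

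For $p=2$ the leading term no longer vanishes. In $\Sym(n)$ the Sylow subgroup contains $\sim n/2$ transpositions and $|C|=\binom{n}{2}\sim n^2/2$, so the transposition term tends to $1/2$; more generally the class $2^k1^{n-2k}$ contributes a term tending to $1/(2^k k!)$, while any cycle type involving a cycle of length $\geq 4$ contributes $O(n^{-2})$. Summing yields the limit $\sqrt{e}-1\approx 0.649$. In $\Alt(n)$ the Sylow $2$-subgroup contains no transposition, its minimal-support elements are double transpositions, and the same computation over the classes $2^{2k}1^{n-4k}$ gives the limit $\cosh(1/2)-1\approx 0.128$. In both cases the limit is comfortably below $0.99$.

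The main obstacle is making this passage to the limit rigorous and \emph{uniform} in $n$: one must verify that the off-diagonal cycle types are genuinely negligible and that the tail of the series is uniformly small, so that the first-moment sum is bounded by a constant $<0.99$ for all large $n$. This is exactly the delicate asymptotic accounting carried out in \cite{Ebe25}, which I would invoke for $p=2$. The odd-prime case is lighter, since the series decays geometrically in $1/n$ and can be summed directly, with some attention paid to $p=3$ (which produces the genuine $\Theta(1/n)$ rate) and to the distinction between $\Sym(n)$ and $\Alt(n)$, where classes may split and centralizer orders halve; such effects alter the constants but not the orders of magnitude.
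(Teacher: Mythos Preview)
Your argument is correct and, for part (i), takes a genuinely different route from the paper's. The paper does not compute the class sum $\sum_C |P\cap C|^2/|C|$ at all: instead it fixes an element $x\in Z(P)$ of order $p$ with fewer than $p$ fixed points and observes that any $\tau\in P\cap P^g$ of order $p$ must centralise both $x$ and $x^g$; hence $Prob(P\cap P^g\neq 1)$ is bounded by the probability that $x$ and a random conjugate $x^g$ share a common centralising element of order $p$, and this in turn by $Prob(\langle x,x^g\rangle\ngeqslant\Alt(n))$, which is then handled via the random-generation estimates of \cite{DGG+25} and \cite{EG21}. Your route is more elementary and self-contained, avoiding the random-generation machinery entirely, but in exchange it demands explicit uniform control over $|P\cap C_\lambda|$ for \emph{every} cycle type $\lambda$ with parts powers of $p$ (including the ``twisted'' elements coming from higher wreath levels that contribute to products of $k\geq p$ disjoint $p$-cycles, which your asymptotic $|P\cap C|\sim ((p-1)n/p)^k/k!$ suppresses); the paper's route is quicker once the cited results are in hand. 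For part (ii) both proofs ultimately invoke \cite{Ebe25}; note that your first-moment limits $\sqrt{e}-1\approx 0.649$ and $\cosh(1/2)-1\approx 0.128$ are only Markov upper bounds for the actual probabilities, which by \cite{Ebe25} tend to $1-e^{-1/2}\approx 0.39$ and $1-\tfrac{3}{2}e^{-1/2}\approx 0.09$ respectively --- so the first-moment method loses a constant factor here but still lands comfortably below $0.99$.
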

\begin{proof}
We start with (i), so let $p \neq 2$.
We will work in $G=\Sym(n)$, but essentially the same proof is valid for $\Alt(n)$.
Let $x \in G$ be an element of order $p$ with less than $p$ fixed points.
As in \cite[Sec. 4]{DGG+25}, let $f'(n,p)$ be the probability that $x$ and a random conjugate $x^g$
both centralize a common element of order $p$.
We have
$$ Prob_{g \in G}(P \cap P^g \neq 1) \leq f'(n,p) . $$
By \cite[Lem. 4.7]{DGG+25}, we can assume that $n$ is a multiple of $p$,
so there exists $x \in G$ of order $p$ without fixed points.
Since $G$ has trivial center, it is easy to see that
$$ f'(n,p) \leq Prob_{g \in G}(\langle x,x^g \rangle \ngeqslant \Alt(n)) . $$
We now refer to \cite[Sec. 3-4]{EG21}
(in that paper set $c_i=c_i'=0$ for $i=1,2$, since both $x$ and $x^g$ have neither fixed points nor $2$-cycles).
By \cite[Sec. 4]{EG21}, the contribution of transitive subgroups not containing $\Alt(n)$ has exponential decay in $n$.
We continue as in \cite[Sec. 3]{EG21}.
By the Markov inequality, we have
$$ Prob_{g \in G}(\langle x,x^g \rangle \mbox{ is intransitive} ) \leq
\sum_{k = p,2p,\ldots,\tfrac{n}{p}} \mathbb{E}[N_k] , $$
where $N_k$ is the number of orbits of $\langle x,x^g \rangle$ of size $k$.
By \cite[Lem. 3.2-3.3]{EG21}, for each $k$ we can write
$$ \mathbb{E}[N_k] \leq \min \left\{ \left( \frac{k}{n} \right)^{k/6} e^{O(k)} , e^{-\Omega(k)} \right\} . $$
For $p \geq 7$ (and so $k \geq 7$), the proof follows by separating according to say $k=10 \log n$.
When $p=k=3$, $\mathbb{E}[N_3]$ is the expected number of $3$-subsets fixed both by $x$ and $x^g$.
It is easy to see that $\mathbb{E}[N_3] = O(n^{-1})$.
For $p=k=5$, we actually have $\mathbb{E}[N_5] = O(n^{-4})$.

For (ii), a stronger result is proven in \cite{Ebe25}.
In fact, $Prob_{g \in G}(P \cap P^g \neq 1)$ tends to $1-e^{-1/2} \approx 0.39$ when $G=\Sym(n)$,
and to $1-\tfrac{3}{2} e^{-1/2} \approx 0.09$ when $G=\Alt(n)$.
\end{proof}

\begin{proof}[Proof of Theorem \ref{thSym}]
We prove (\ref{eqSym}) for all $n \geq n_0$ for some integer $n_0$.
Since $r$ is the number of primes dividing $n!$, we actually have $r=\pi(n)$
where $\pi(n)$ is the number of primes less or equal than $n$.
By Proposition \ref{propSym}, we have
$$ \sum_{i=1}^r Prob_{g \in G}(P_i \cap P_i^g \neq 1) < 0.99 + \frac{C \, \pi(n)}{n} $$
for some absolute constant $C>0$.
The proof follows because $\pi(n) = o(n)$.
\end{proof}

We refer the reader to \cite{BH25} for an application of the probabilistic method to non-alternating simple groups.
This requires an analogue of Proposition \ref{propSym} for groups of Lie type \cite[Th. D]{BH25},
which is much more difficult to obtain.

\subsection{Metanilpotent groups} 

We first describe the behaviour of the set $\Gamma_G(P)$ in direct products.
This will produce metabelian groups with small good sets.

\begin{remark} \label{remSmallGS}
Let $G$ be a finite group, $p$ a prime, and let $P \in \Syl_p(G)$.
If $D=G^n$ for some $n \geq 1$, then $Q \in \Syl_p(D)$ is isomorphic to $P^n$.
Moreover, $\Gamma_D(Q)$ is the Cartesian product (as a set) of $n$ copies of $\Gamma_G(P)$ and
$$ \frac{|\Gamma_D(Q)|}{|D|} = \left( \frac{|\Gamma_G(P)|}{|G|} \right)^n . $$
For example, setting $G=\Sym(3)$ and $p=2$,
we have $\Gamma_G(P) = G \setminus P$
and $\frac{|\Gamma_D(Q)|}{|D|} = (2/3)^n \to 0$ for large $n$.
\end{remark}

In the rest of the paper, we will use the following observation without further mention:
If $G$ is metanilpotent and $F(G)=O_p(G)$ for some prime $p$,
then $p$ does not divide $|G:F(G)|$.
The key idea in proving Theorem \ref{thMetaNilp} is to achieve synchronization using a product of commuting elements.
We start with the following technical result.

\begin{lemma} \label{lem:key}
Let $G$ be a group of odd order where $F(G)$ is a Hall subgroup and $G/F(G)$ is nilpotent.
Let $(P_i)_{i=1}^r$ be non-normal Sylow subgroups of $G$ for distinct primes.
If $A \unlhd G$ is abelian and $AN_G(P_i)=G$ for all $i$,
then there exists $x \in A$ such that $P_i \cap P_i^x =1$ for all $i$.
\end{lemma}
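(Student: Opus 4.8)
Let me think about how to prove Lemma~\ref{lem:key}. The hypotheses give us a group $G$ of odd order with $F=F(G)$ a Hall subgroup, $G/F$ nilpotent, and an abelian normal subgroup $A$ with $AN_G(P_i)=G$ for every $i$; the goal is a single element $x\in A$ that simultaneously kills all the intersections $P_i\cap P_i^x$. The condition $AN_G(P_i)=G$ is exactly what makes $A$ large enough to move $P_i$ to every one of its conjugates, so for each individual $i$ there is \emph{some} $a\in A$ with $P_i\cap P_i^a=1$; the whole difficulty is to \emph{synchronize}, i.e.\ to find one $x\in A$ that works for all $i$ at once. The natural strategy is to build $x$ as a product of commuting elements coming from the different primes, exploiting the key algebraic fact highlighted in the introduction: if $P=Q^a$ and $Q\cap Q^b=1$ with $[a,b]=1$, then $P\cap P^b=1$. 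The plan is to decompose $A$ according to the primes and to treat the $p_i$-part of $A$ as the engine that moves $P_i$.

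First I would reduce to the structural picture. Since $F$ is a nilpotent Hall subgroup and $G/F$ is nilpotent, $G$ is metanilpotent of odd order, and $A\unlhd G$ abelian can be split as $A=\prod_i A_{p_i}\times A_0$ by its Hall components relative to the primes $p_1,\dots,p_r$ and the remaining primes. The relation $AN_G(P_i)=G$ says $A$ surjects onto $G/N_G(P_i)$, i.e.\ $A$ acts transitively by conjugation on $\Syl_{p_i}(G)$; because $P_i$ is a $p_i$-group and $A$ is abelian normal, I expect that it is really the part of $A$ of order coprime to $p_i$ that does the moving (conjugation by a $p_i$-element of $A$ can only move $P_i$ within its own $p_i$-structure). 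So for each $i$ I want an element $a_i\in A$ whose order is coprime to $p_i$ and which satisfies $P_i\cap P_i^{a_i}=1$. Since $A$ is abelian, the candidate elements $a_1,\dots,a_r$ all commute with one another automatically.

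Next comes the synchronization step, where the commuting-elements lemma does the work. Set $x=a_1a_2\cdots a_r$. I must check $P_i\cap P_i^x=1$ for each fixed $i$. Write $x=a_i\cdot c_i$ where $c_i=\prod_{j\neq i}a_j$. The elements $a_j$ for $j\neq i$ have, in the relevant coordinates, orders built from primes $p_j\neq p_i$; the idea is that conjugating $P_i$ by $c_i$ moves $P_i$ to another Sylow $p_i$-subgroup $P_i^{c_i}=:Q$, and then I apply the quoted fact with the roles $P_i^{c_i}=Q$, $b=a_i$, using $[a_i,c_i]=1$: from $Q\cap Q^{a_i}=1$ one concludes $P_i\cap P_i^{x}= (P_i^{c_i}\cap P_i^{c_i a_i})^{c_i^{-1}}=(Q\cap Q^{a_i})^{c_i^{-1}}=1$, provided I have arranged that the chosen $a_i$ trivially intersects \emph{every} conjugate of $P_i$ of the form $Q$, not just $P_i$ itself. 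Equivalently, I want $a_i$ to satisfy $R\cap R^{a_i}=1$ for all $R\in\Syl_{p_i}(G)$; by conjugacy this is the single condition that $a_i$ be a ``good'' element for $P_i$ in the strongest sense, and transitivity of the $A$-action combined with abelianness should let me promote ``one good $a_i$ for $P_i$'' to ``$a_i$ good for all conjugates'' because conjugation by $A$ fixes $a_i$.

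\textbf{The main obstacle.} The delicate point is the interaction between the moving element $c_i$ (which shifts $P_i$ to a different Sylow subgroup) and the killing element $a_i$: I need the existence of an $a_i\in A$, of order coprime to $p_i$, that intersects \emph{every} Sylow $p_i$-subgroup trivially, and I need to know that such an $a_i$ really exists from the hypothesis $AN_G(P_i)=G$ alone. Establishing this is where the odd-order and Hall/metanilpotent hypotheses must be used: transitivity of the $A$-action on $\Syl_{p_i}(G)$ gives plenty of conjugates, and I expect to invoke It\^o's theorem (Theorem~\ref{thIto}) inside an appropriate section $V\rtimes A_i$-type subgroup, translating $P_i\cap P_i^{a_i}=1$ into a regular-orbit / trivial-centralizer statement via Lemma~\ref{lemIntCen}. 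Getting the bookkeeping of the primes right—ensuring the $a_i$ lie in the correct Hall components so that the $c_i$-move lands inside the fixed-point set of $a_i$ and the commuting-elements fact applies cleanly for every $i$ simultaneously—is the real content; everything else is routine decomposition.
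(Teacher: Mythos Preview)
Your synchronization step contains a genuine error. With $x = a_i c_i$ you claim
\[
P_i \cap P_i^{x} \;=\; (P_i^{c_i} \cap P_i^{c_i a_i})^{c_i^{-1}} \;=\; (Q \cap Q^{a_i})^{c_i^{-1}},
\]
but conjugating $Q \cap Q^{a_i}$ by $c_i^{-1}$ gives $P_i \cap P_i^{c_i a_i c_i^{-1}} = P_i \cap P_i^{a_i}$ (since $[a_i,c_i]=1$), not $P_i \cap P_i^{x}$. Equivalently, your application of the ``quoted fact'' with $P = Q^{c_i^{-1}} = P_i$ and $b = a_i$ only recovers $P_i \cap P_i^{a_i} = 1$, which you already knew. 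Even granting that $R \cap R^{a_i} = 1$ for every $R \in \Syl_{p_i}(G)$, the intersection $P_i \cap P_i^{x}$ is simply not of the form $R \cap R^{a_i}$: the two Sylow subgroups differ by $x = a_i c_i$, and there is no reason $c_i$ should lie in $N_G(P_i)$. So the product $x = a_1 \cdots a_r$ need not be good for any $P_i$. (A side remark: since $F(G)$ is a Hall subgroup and each $P_i$ is non-normal, no $p_i$ divides $|F(G)|$, hence none divides $|A|$; your decomposition $A = \prod_i A_{p_i} \times A_0$ collapses to $A_0 = A$.)

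The paper avoids this by never splitting $x$ into prime pieces. It fixes a single complement $K = K_1 \times \cdots \times K_r$ to $F(G)$ via Schur--Zassenhaus, uses the transitivity of $A$ on each $\Syl_{p_i}(G)$ to show that $AK \unlhd G$ and hence $F(AK) = A$, and then applies Bialostocki's theorem (Theorem~\ref{thBialoHall}) to the nilpotent Hall subgroup $K$ of the odd-order group $AK$: this produces a \emph{single} $x \in A$ with $K \cap K^{x} = 1$, so $K_i \cap K_i^{x} = 1$ for all $i$ simultaneously. Only then is the commuting-elements trick invoked---writing $P_i = K_i^{b_i}$ with $b_i \in A$ and using $[b_i,x]=1$ gives $P_i \cap P_i^{x} = (K_i \cap K_i^{x})^{b_i} = 1$---and here it succeeds precisely because the same $x$ is already good for every $K_i$ before any transfer is attempted.
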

\begin{proof}
Let $F=F(G)$, and observe that $A \subseteq F$.
By the Schur-Zassenhaus theorem, there exists $K<G$ such that $G= F \rtimes K$.
We may assume that $\pi(G/F) = \{p_1,\ldots,p_r\}$ and that $P_i \in \Syl_{p_i}(G)$ for each $i$.
For fixed $i$, the hypothesis $A N_G(P_i)=G$ means that $A$ is transitive on $\Syl_{p_i}(G)$ by conjugation.
We first use this fact to show that $F(AK)=A$.

Let $K=K_1 \times \cdots \times K_r$ where $K_i \in \Syl_{p_i}(K) \subseteq \Syl_{p_i}(G)$.
If $g \in F$, then
$$ (AK)^g = AK^g = A(K_1^g \times \cdots \times K_r^g) . $$
By the transitivity of $A$ there exist $a_1,\ldots,a_r \in A$ such that
$$ (AK)^g = A(K_1^{a_1} \times \cdots \times K_r^{a_r}) . $$
Therefore $K_i^{a_i} \subseteq (AK)^g$ for all $i$, and since $a_i \in A$,
also $K_i \subseteq (AK)^g$ for all $i$.
This implies that $K \subseteq (AK)^g$ and so $AK \unlhd G$ because $g$ is arbitrary.
It follows that $F(AK) \subseteq AK \cap F = A$ and equality holds.

By Theorem \ref{thBialoHall} there exists $x \in AK$ such that $K \cap K^x \subseteq K \cap A =1$.
We can assume $x\in A$.
Of course we have
$$ K_i \cap K_i^x \subseteq K \cap K^x = 1 $$
for each $i$.
Using again the transitivity of $A$ on $\Syl_{p_i}(G)$, for each $i$ there exists $b_i\in A$ such that $P_i=K_i^{b_i}$.
But $A$ is abelian and so $b_i x = x b_i$ which gives
$$ P_i \cap P_i^x = K_i^{b_i} \cap K_i^{b_i x} = (K_i \cap K_i^x)^{b_i} = 1 $$
for all $i$, as desired.
\end{proof}

We are ready to deal with the case where $|F(G)|$ and $|G:F(G)|$ are not coprime.
A more involved argument is required to obtain specific elements whose orders satisfy convenient arithmetical conditions.
This is done to apply Lemma \ref{lemIntCen} and then to use a property of centralizers,
namely that $C_G(ab)=C_G(a) \cap C_G(b)$ if $a$ and $b$ are commuting elements of coprime orders.

\begin{proof}[Proof of Theorem \ref{thMetaNilp}]
We work by induction on the order of $G$.
Let $P_i \in \Syl_{p_i}(G)$ for distinct primes $p_1,\ldots,p_n$,
so that we want to find $x \in F(G)$ such that $P_i \cap P_i^x = O_{p_i}(G)$ for all $i$.

We first reduce to the situation where the Frattini subgroup $\Phi(G)$ is trivial.
This is because $\Phi(G/\Phi(G))=1$,
and $F(G/\Phi(G))=F(G)/\Phi(G)$ \cite[Th. 10.6(c)]{DH92}.
In fact $\Phi(G)P_i/\Phi(G) \in \Syl_{p_i}(G/\Phi(G))$,
and working in $G/\Phi(G)$ we obtain $x \in F(G)$ such that $\Phi(G)P_i \cap  \Phi(G)P_i^x \subseteq F(G)$, which implies $P_i \cap P_i^x = O_{p_i}(G)$ for all $i$.

So $\Phi(G)=1$ and $F=F(G)$ is a direct product of elementary abelian groups.
Up to arranging the $p_i$'s, set $p=p_1$ such that $O_p(G) \neq 1$.
If $O_{p'}(F)=1$, then $F=O_p(G)$.
In this case $F$ is an elementary abelian $p$-group and we are done by Lemma \ref{lem:key}. (Set $A=F$ and note that for each $i$, the normalizer $N_G(P_i)$ contains a complement of $F$ in $G$.)

So $O_{p'}(F) \neq 1$, and let $N=O_{p'}(G)$.
Note that $O_{p'}(G/N)=1$ and $F(G/N) = O_p(G/N)$.
Since $G/N$ is metanilpotent, $O_p(G/N) \in \Syl_p(G/N)$.
If $P=P_1$, then $NP/N=O_p(G/N)=F(G/N)$ and $F(NP)=F$.

Let $F = A \times B$ where $A=O_p(G)$ and $B=O_{p'}(F)$.
Note that $AN=A \times N$ and $F(AN)=F$.
Also $F(NP/A)$ is a $p'$-group and is therefore contained in $O_{p'}(NP/A)$.
Now $NA/A \subseteq O_{p'}(NP/A)$,
but the $p'$-part of $|NP/A|$ is $|NA/A|$, so $NA/A = O_{p'}(NP/A)$.
Therefore $F(NP/A) \subseteq NA/A$.
It follows that 
\begin{equation} \label{eqFNPA}
F(NP/A)=F(NA/A) = F/A \cong B . 
\end{equation}

We now observe that
$$ \left( \frac{AN}{N} \right) N_{G/N}\left(\frac{NP_i}{N}\right)
\geqslant
\frac{F N_G(P_i)}{N}
= \frac{G}{N} $$
for all $i$.
The last equality is because $G/F$ is nilpotent and $N_G(P_i)$ contains a complement of $F$ in $G$.
By Lemma \ref{lem:key} applied to $G/N$ and $AN/N$ there exists $a_{_0} \in A$ such that
\begin{equation} \label{eqInsideNP}
 P_i \cap P_i^{a_{_0} b} \subseteq NP_i \cap NP_i^{a_{_0}} \subseteq NP 
\end{equation} 
for all $i$ and all $b \in N$.

For each $i$, let $Q_i = NP \cap P_i$.
Note that $P=Q_1$ and $Q_i \in \Syl_{p_i}(NP)$ for all $i$.
By the inductive hypothesis on $NP/A$, and (\ref{eqFNPA}), there exists $b_{_0} \in B$ such that 
\begin{equation} \label{eqInsideF}
Q_i \cap Q_i^{a b_{_0}} \subseteq AQ_i \cap A Q_i^{b_{_0}} \subseteq F
\end{equation}
for all $i$ and all $a \in A$.

For each $i$, we write $\hat{b}_i$ to denote the $p'_i$-part of $b_{_0}$.
(Note that $\hat{b}_1 = b_0$.)
The $p_i$-part of $b_{_0}$ lies in $Q_i$ and so, by (\ref{eqInsideF}),
\begin{equation} \label{eqInsideFPPart}
Q_i \cap Q_i^{\hat{b}_i} =
Q_i\cap Q_i^{b_{_0}} \subseteq F
\end{equation}
for all $i$. 

Let $x=a_{_0} b_{_0} \in F$, and we will show that $P_i \cap P_i^x = O_{p_i}(G)$ for all $i$.
When $i=1$ (that is $P=P_1=Q_1$) we are done by (\ref{eqInsideF}).
Fix $i \geq 2$.
From (\ref{eqInsideNP}) we have
$$ P_i \cap P_i^x = P_i \cap P_i^x \cap NP = Q_i \cap Q_i^x . $$
Since the $p_i$-part of $b_{_0}$ lies in $Q_i$, we have
$$ Q_i \cap Q_i^x =
Q_i \cap Q_i^{a_{_0} \hat{b}_i} . $$
Now it is useful to work with centralizers.
Looking at $O_{p_i'}(G) \rtimes Q_i$ and using Lemma \ref{lemIntCen},
we obtain $C_{Q_i}(g) =Q_i \cap Q_i^g$ for every $p_i'$-element $g \in G$.
Note that $a_{_0}$, $\hat{b}_i$ and $a_{_0} \hat{b}_i$ are all $p_i'$-elements.
Moreover, since $a_{_0}$ and $\hat{b}_i$ are commuting elements of coprime orders,
we can express both $a_{_0}$ and $\hat{b}_i$ as suitable powers of $a_{_0} \hat{b}_i$.
This implies that $C_{Q_i}(a_{_0} \hat{b}_i) = C_{Q_i}(a_{_0}) \cap C_{Q_i}(\hat{b}_i)$.
We obtain
$$ P_i \cap P_i^x = 
C_{Q_i}(a_0 \hat{b}_i) \subseteq 
C_{Q_i}(\hat{b}_i) =
Q_i \cap Q_i^{\hat{b}_i} . $$
We now complete the proof by appealing to (\ref{eqInsideFPPart}).
\end{proof}

\vspace{0.3cm} \noindent
 {\bfseries Acknowledgments:}
 The second author is supported by the Royal Society.
 We thank S. Eberhard, D. Garzoni, H.Y. Huang and E. Maini for useful conversations,
 and an anonymous referee for a careful reading of the manuscript and several comments.

\vspace{0.1cm}

   \vspace{0.5cm}


\begin{thebibliography}{ZZZZZZZ} 

\bibitem[AMB24]{AMB24} M. Anagnostopoulou-Merkouri, T.C. Burness,
 \textit{On the regularity number of a finite group and other base-related invariants},
 J. London Math. Soc. \textbf{110} (2024), e70035.

 \bibitem[Bae53]{Bae53} R. Baer,
 \textit{Group elements of prime power order},
 Trans. Amer. Math. Soc. \textbf{75} (1953), 20-47. 

 \bibitem[Bia75]{Bia75} A. Bialostocki,
 \textit{On the product of two nilpotent subgroups of a finite group},
 Isr. J. Math. \textbf{20} (1975), 178-188. 
 
  \bibitem[Bro63]{Bro63} J.S. Brodkey,
 \textit{A note on finite groups with an abelian Sylow group},
 Proc. Amer. Math. Soc. \textbf{14} (1963), 132-133.

 \bibitem[BFS95]{BFS95} R.A. Bryce, V. Fedri, L. Serena,
 \textit{A Hughes-like property for finite groups},
 Proc. Edin. Math. Soc. \textbf{38} (1995), 533-541.  
 
 \bibitem[BH25]{BH25} T.C. Burness, H.Y. Huang,
  \textit{On the intersection of nilpotent subgroups in simple groups},
  preprint available at \texttt{\MYhref[mypink]{https://arxiv.org/pdf/2508.03479}{arxiv.org/pdf/2508.03479}} (2025).
 
 \bibitem[DGG$^+$25]{DGG+25} P. Diaconis, E. Giannelli, R.M. Guralnick, S. Law, G. Navarro, B. Sambale, H. Spink,
  \textit{On the number and sizes of double cosets of Sylow subgroups of the symmetric group},
 J. Algebra \textbf{689} (2026), 62-86.
 
  \bibitem[DH92]{DH92} K. Doerk, T. Hawkes,
  \textit{Finite Soluble Groups},
  De Gruyter Expositions in Mathematics \textbf{4},
  Walter de Gruyter (1992).
  
  \bibitem[Dol05]{Dol05} S. Dolfi,
  \textit{Intersection of odd order Hall subgroups},
  Bull. London Math. Soc. \textbf{37} (2005), 61-66.
  
  \bibitem[Ebe25]{Ebe25} S. Eberhard,
  \textit{Intersections of Sylow $2$-subgroups in symmetric groups},
   to appear in J. Group Theory (2025).

\bibitem[EG21]{EG21} S. Eberhard, D. Garzoni,
  \textit{Random generation with cycle type restrictions},
  Algebr. Comb. \textbf{4} (2021), 1-25.

 \bibitem[Har81]{Har81} B.B. Hargraves,
 \textit{The existence of regular orbits for nilpotent groups}, 
J. Algebra \textbf{72} (1981), 54-100.

\bibitem[Her71]{Her71} M. Herzog,
   \textit{Intersections of nilpotent Hall subgroups},
   Pacific J. Math. \textbf{36} (1971), 331-333.

\bibitem[Kou22]{Kou22} E.I. Khukhro, V.D. Mazurov,
 \textit{Unsolved Problems in Group Theory: The Kourovka Notebook 20}, 
Sobolev Institute of Mathematics, Novosibirsk, 2022.

\bibitem[Kur13]{Kur13} R.K. Kurmazov,
   \textit{On the intersection of conjugate nilpotent subgroups in permutation groups},
   Sib. Math. J. \textbf{54} (2013), 73-77.

\bibitem[Isa08]{Isa08} I.M. Isaacs,
\textit{Finite Group Theory},
Graduate Studies in Mathematics \textbf{92},
American Mathematical Society, Providence (2008).

\bibitem[Ito58]{Ito58} N. It\^o,
   \textit{\"Uber den Kleinsten $p$-Durchschnitt ausfl\"osbarer gruppen},
   Arch. Math. (Basel) \textbf{9} (1958), 27-32.
   
   \bibitem[Man71]{Man71} A. Mann,
   \textit{Injectors and normal subgroups of finite groups},
   Isr. J. Math. \textbf{9} (1971), 554-558.

 \bibitem[Man75]{Man75} A. Mann,
   \textit{The intersection of Sylow subgroups},
   Proc. Amer. Math. Soc. \textbf{53} (1975), 262-264.
   
   \bibitem[MZ96]{MZ96} V.D. Mazurov, V.I. Zenkov,
   \textit{The intersection of Sylow subgroups in finite groups},
   Algebra Logic \textbf{35} (1996), 236-240.

\bibitem[Pas66]{Pas66} D.S. Passman,
\textit{Groups with normal solvable Hall $p'$-subgroups},
Trans. Amer. Math. Soc. \textbf{123} (1966), 99-111.

\bibitem[Rob84]{Rob84} G.R. Robinson,
   \textit{On Sylow intersections in finite groups},
   Proc. Amer. Math. Soc. \textbf{90} (1984), 21-24.
   
    \bibitem[Zen94]{Zen94} V.I. Zenkov,
   \textit{Intersections of abelian subgroups in finite groups},
   Math. Notes \textbf{56} (1994), 869-871.
   
     \bibitem[Zen96]{Zen96} V.I. Zenkov,
  \textit{On intersections of nilpotent subgroups in finite groups},
   Fundam. Prikl. Mat. \textbf{2} (1996), 1-92.

   \end{thebibliography}
\end{document}